\newcommand{\lv}[1]{{#1}}
\newcommand{\sv}[1]{}
\renewcommand{\emptyset}{\varnothing}
\newcommand{\eps}{\varepsilon}
\newcommand{\si}{\sigma}
\newcommand{\de}{\mathrel{\mathop:}\hspace*{-.6pt}=}
\newcommand{\boxd}[1]{\framebox{\vphantom1\ensuremath{#1}}}
\newcommand{\un}{\mathbf{1}}
\newcommand{\sq}{\operatorname{Seq}}
\newcommand{\squ}{\sq_{\ge 1}}
\newcommand{\sqk}{\sq_{\ge k}}
\newcommand{\cA}{\mathcal{A}}
\newcommand{\cB}{\mathcal{B}}
\newcommand{\cC}{\mathcal{C}}
\newcommand{\cG}{\mathcal{G}}
\newcommand{\cM}{\mathcal{M}}
\newcommand{\cP}{\mathcal{P}}
\newcommand{\cR}{\mathcal{R}}
\newcommand{\cT}{\mathcal{T}}
\newcommand{\cX}{\mathcal{X}}
\newcommand{\cZ}{\mathcal{Z}}
\newcommand{\sN}{\mathscr{N}}
\newcommand{\sS}{\mathscr{S}}
\newcommand{\fa}{\mathfrak{a}}
\newcommand{\fb}{\mathfrak{b}}
\newcommand{\fc}{\mathfrak{c}}
\newcommand{\fp}{\mathfrak{w}}
\newcommand{\C}{\operatorname{Cat}}
\theoremstyle{plain}
\newtheorem{thm}{Theorem}
\newtheorem{lem}[thm]{Lemma}
\newtheorem{prop}[thm]{Proposition}
\newtheorem{defi}{Definition}
\theoremstyle{definition}
\newtheorem*{rem}{Remark}
\newtheorem*{note}{Note}
\newtheorem*{notn}{Notation}
\newtheorem*{ack}{Acknowledgment}
\newenvironment{pre}[1][\proofname]{%
  \proof[#1]%
}{\endproof}
\renewcommand*{\@fnsymbol}[1]{\ensuremath{\ifcase#1\or \dagger\or \ddagger\or
   \mathsection\or \mathparagraph\or \|\or **\or \dagger\dagger
   \or \ddagger\ddagger \else\@ctrerr\fi}}
\title{A bijective study of basketball walks}
\author{J\'er\'emie \textsc{Bettinelli}\thanks{CNRS \& Laboratoire d'Informatique de l'\'Ecole polytechnique; \href{mailto:jeremie.bettinelli@normalesup.org}{\nolinkurl{jeremie.bettinelli@normalesup.org}};\newline \nolinkurl{www.normalesup.org/}\texttildelow\nolinkurl{bettinel}.}
\and 
\'Eric \textsc{Fusy}\thanks{CNRS \& Laboratoire d'Informatique de l'\'Ecole polytechnique; \href{mailto:fusy@lix.polytechnique.fr}{\nolinkurl{fusy@lix.polytechnique.fr}};\newline \nolinkurl{www.lix.polytechnique.fr/Labo/Eric.Fusy/}.}
\and
C\'ecile \textsc{Mailler}\thanks{Department of Mathematical Sciences, University of Bath; \href{mailto:c.mailler@bath.ac.uk}{\nolinkurl{c.mailler@bath.ac.uk}}; \nolinkurl{people.bath.ac.uk/cdm37/}.}
\and
Lucas \textsc{Randazzo}\thanks{Laboratoire d'informatique Gaspard-Monge, Universit\'e Paris-Est Marne-la-Vall\'ee; \href{mailto:lucas.randazzo@u-pem.fr}{\nolinkurl{lucas.randazzo@u-pem.fr}}.}
}
\begin{document}
\maketitle

\begin{abstract}
The Catalan numbers count many classes of combinatorial objects. The most emblematic such objects are probably the Dyck walks and the binary trees, and, whenever another class of combinatorial objects is counted by the Catalan numbers, it is natural to search for an explicit bijection between the latter objects and one of the former objects. In most cases, such a bijection happens to be relatively simple but it might sometimes be more intricate.

In this work, we focus on so-called \emph{basketball walks}, which are integer-valued walks with step-set $\{-2,-1,+1,+2\}$. 
We give an explicit bijection that maps, for each $n\ge 2$, $n$-step basketball walks from~$0$ to~$0$ that visit~$1$ and are positive except at their extremities to $n$-leaf binary trees. Moreover, we can partition the steps of a walk into $\pm 1$-steps, odd $+2$-steps or even $-2$-steps, and odd $-2$-steps or even $+2$-steps, and these three types of steps are mapped through our bijection to double leaves, left leaves, and right leaves of the corresponding tree.

We also prove that basketball walks from~$0$ to~$1$ that are positive except at the origin are in bijection with increasing unary-binary trees with associated permutation avoiding~$213$. We furthermore give the refined generating function of these objects with an extra variable accounting for the unary nodes.
\end{abstract}


\section{Introduction}\label{secintro}

An integer-valued \emph{walk} is a finite sequence of integers, usually starting at~$0$. Its \emph{steps} are the differences of two consecutive values it takes. Walks with a given possible step-set have been the focus of many studies \cite{gessel1980factorization,labelle1990generalized,duchon2000enumeration,banderier2002basic, Bousquet08basket,Kra15} and so-called {\L}ukasiewicz walks (walks whose step-set~$\sS$ satisfies $-1\in\sS\subseteq\{-1,0,1,2,\ldots\}$) are of particular interest as they encode plane trees whose vertex outdegrees (or arities) all lie in $\sS+1$ \cite[Chapter~11]{lothaire1997combinatorics}.

In this work, we consider the simplest case of walks that are not {\L}ukasiewicz walks, namely \emph{basketball walks}. These are walks with step-set $\{-2,-1,+1,+2\}$; they were named by Ayyer and Zeilberger~\cite{AyZe07} from the fact that they record the score difference between two teams playing a basketball game at a time where three-pointers did not exist. Basketball excursions, that is, nonnegative basketball walks starting and ending at~$0$, were first counted by Labelle and Yeh~\cite{labelle1989dyck,labelle1990generalized} (they are called Dyck paths of knight moves in this reference). The authors give an explicit algebraic (quartic) expression for their generating function. Motivated by their connections with constrained polymers problems, Ayyer and Zeilberger~\cite{AyZe07} studied the generating function of bounded basketball excursions. Basketball walks were also later considered by Bousquet-M\'elou~\cite{Bousquet08basket}, who looked at the structure of the algebraic equations satisfied by the generating function of excursions, as well as that of bounded excursions. The latter has also been studied recently~\cite{bacher2013generalized}.

More recently, using the so-called kernel method, \lv{Banderier, Krattenthaler, Krinik, Kruchinin, Kruchinin, Nguyen and Wallner}\sv{the authors of}~\cite{BKKKKNW} \lv{showed that}\sv{studied} the generating function \sv{$G$ }of basketball walks from~$0$ to~$1$ that are positive except at the origin, counted with weight~$z$ per step\lv{, is given\footnote{In~\cite{BKKKKNW}, the authors use the notation~$G_{0,1}$. We removed the subscript in order to lighten the notation.} by
\begin{equation*}\label{G01}
G(z)=-\frac12+\frac12\sqrt{\frac{2 - 3z - 2\sqrt{1 - 4z}}{z}}.
\end{equation*}
}\sv{. }They also derived summatory expressions for the number of such walks of given length, and for the number of basketball excursions of a given length, as well as asymptotics for these numbers. Finally, they noted, and this was the starting point of the present work, that the generating function~$G$ is related to the Catalan generating function~$\C$, characterized by $\C(z) = 1 + z\C(z)^2$, by the simple equation~\cite[Equation~(3.14)]{BKKKKNW}:
\begin{equation}\label{eqBKKKKNW}
1 + G(z) + G^2(z) = \C(z).
\end{equation}

Instead of considering $G$-walks, we will rather focus on basketball walks from~$0$ to~$0$ that visit~$1$ and are positive except at the extremities: we call them $C$-walks and denote by~$C$ their generating function. A first step decomposition of $C$-walks yields that $C(z)=zG(z) + zG^2(z)$ as a $C$-walk is either a $+1$-step followed by a reversed $G$-walk or a $+2$-step followed by a reversed $G$-walk and another reversed $G$-walk (recall that $C$-walks have to visit~$1$). Expressed in terms of~$C$, Equation~\eqref{eqBKKKKNW} becomes
\begin{equation}\label{eqC}
C(z)=z\,\big(\!\C(z)-1\big),
\end{equation}
which is the generating function of nontrivial binary trees counted with weight~$z$ per leaf. This is the equation to which we give a bijective interpretation. As a byproduct of our bijection, we obtain the following proposition. We say that a step of a walk is \emph{even} (resp.\ \emph{odd}) if it starts at even (resp.\ odd) height.

\begin{prop}\label{propstats}
The number of basketball walks from~$0$ to~$0$ that visit~$1$ and are positive except at the extremities, with~$2d$ $\pm 1$-steps, $\ell$ odd $+2$-steps or even $-2$-steps, and~$r$ odd $-2$-steps or even $+2$-steps is equal to
$$\frac{1}{d}\binom{2d-2}{d-1}\binom{\ell+r+2d-2}{\ell+r}\binom{\ell+r}{\ell}\,.$$
\end{prop}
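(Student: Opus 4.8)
The plan is to deduce the proposition from the bijection, read as a statistic-preserving correspondence between $C$-walks and nontrivial binary trees (those with at least one internal node). Reading the leaves of such a tree as partitioned into three types — I call a leaf a \emph{double leaf} if its sibling is also a leaf, a \emph{left leaf} if it is a left child whose sibling is internal, and a \emph{right leaf} if it is a right child whose sibling is internal — the bijection sends the $2d$ $\pm1$-steps to double leaves (hence $d$ sibling-pairs, or cherries), the $\ell$ odd $+2$-/even $-2$-steps to left leaves, and the $r$ odd $-2$-/even $+2$-steps to right leaves. So I would first reduce the claim to the purely combinatorial count of nontrivial binary trees having exactly $d$ cherries, $\ell$ left leaves and $r$ right leaves. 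Note that $d\ge1$ automatically, since the deepest internal node of any nontrivial tree forms a cherry.

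Next I would set up the trivariate generating function $F=F(p,q,s)$ counting nontrivial binary trees with $p$ per double leaf, $q$ per left leaf and $s$ per right leaf. Splitting on the two subtrees hanging from the root, each of which is either a single leaf or again a nontrivial tree, the four cases give
\[
F = p^2 + (q+s)\,F + F^2,
\]
where the $p^2$ records a root cherry, the $qF$ and $sF$ a single leaf beside a nontrivial subtree, and $F^2$ two nontrivial subtrees. As a consistency check, setting $p=q=s=z$ turns this into $F=(z+F)^2$, whose relevant root is $F=z(\C(z)-1)=C(z)$, recovering Equation~\eqref{eqC}.

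Then I would solve and extract coefficients. Writing $w=q+s$ and keeping the branch with $F(0,0,0)=0$,
\[
F=\frac{(1-w)-\sqrt{(1-w)^2-4p^2}}{2}=\sum_{k\ge1}\frac1k\binom{2k-2}{k-1}\frac{p^{2k}}{(1-w)^{2k-1}},
\]
using $1-\sqrt{1-4y}=2\sum_{k\ge1}\frac1k\binom{2k-2}{k-1}y^k$ with $y=p^2/(1-w)^2$. The coefficient of $p^{2d}$ is the Catalan number $\frac1d\binom{2d-2}{d-1}$ times $(1-w)^{-(2d-1)}$; expanding $(1-w)^{-(2d-1)}=\sum_{m\ge0}\binom{m+2d-2}{m}w^m$ and reading off $[q^\ell s^r]$ from $(q+s)^{\ell+r}$ supplies the remaining factors $\binom{\ell+r+2d-2}{\ell+r}$ and $\binom{\ell+r}{\ell}$. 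Their product is exactly the asserted formula.

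The generating-function manipulations above are routine, so I expect the real difficulty to lie entirely in the first step: verifying that the parity-refined step types are matched term-for-term with the three leaf types under the bijection. This is a statement about the fine structure of the map rather than about enumeration, and it is where the argument must lean on the explicit construction. If the bijection were unavailable, I would instead refine the first-step/first-return decompositions of the $G$- and $C$-walks by catalytic variables tracking the parities of the $\pm2$-steps; this is workable but markedly heavier than the tree computation, which is why routing through the bijection is the approach I would favour.
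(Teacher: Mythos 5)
Your proposal is correct, and its first step---reducing, via the matched statistics of the bijection, to counting binary trees with $2d$ double leaves, $\ell$ left leaves and $r$ right leaves---is exactly the paper's reduction (that statistics-matching is Proposition~\ref{propmatch}, proved separately there, so leaning on it without reproving it is legitimate and parallels the paper's own structure). Where you genuinely diverge is in how those trees are then counted. The paper gives a direct bijective construction: start from a binary tree with a root of degree~$1$ and $d$ leaves (counted by the $(d-1)$-th Catalan number $\frac1d\binom{2d-2}{d-1}$), place $\ell+r$ extra vertices on its $2d-1$ edges in $\binom{\ell+r+2d-2}{\ell+r}$ ways, graft a cherry onto each original leaf, and choose which $\ell$ of the added vertices get their new edge grafted on the left, giving $\binom{\ell+r}{\ell}$; each factor of the formula is thereby realized as an explicit choice. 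You instead set up the trivariate root decomposition $F=p^2+(q+s)F+F^2$, solve the quadratic, and extract coefficients via the Catalan and binomial series; this computation is correct, including the branch choice and the sanity check that $p=q=s=z$ recovers Equation~\eqref{eqC}. The paper itself remarks that the formula ``can also be obtained through the Lagrange inversion formula,'' so your algebraic route is one the authors anticipated but did not take. What their construction buys is a combinatorial explanation of why the answer is a product of three binomial factors; what yours buys is mechanical verifiability, and it would extend to further leaf statistics without having to invent a new grafting construction each time.
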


\bigskip
In the second half of this article, we show how basketball walks are related to increasing unary-binary trees with associated permutation avoiding~$213$. Recall that a \emph{unary-binary tree} of size $n\ge 1$ is a plane tree with~$n$ vertices, each having~$0$, $1$ or $2$ children. It is \emph{increasing} if its vertices are bijectively labeled with~$1$, \ldots, $n$ in such a way that any vertex receives a label larger than its parent's. In other words, the sequence of labels read along any branch of the tree from the root to a leaf is increasing. With an increasing tree, we associate the permutation obtained by reading the labels of the tree in breadth-first search order, from left to right. Recall that a permutation $\si=(\si_1,\ldots,\si_n)$ is said to \emph{avoid} the pattern $\pi=(\pi_1,\ldots,\pi_k)$ if, for any $1\le i_1 < i_2 < \ldots < i_k\le n$, the permutation obtained from $(\si_{i_1},\ldots,\si_{i_k})$ after relabeling with $1$, \ldots, $k$ in the same relative order is different from~$\pi$.\lv{ (Note that, in particular, a permutation of size~$n$ avoids any pattern of size strictly larger than~$n$.)}

The study of increasing trees whose associated permutation avoids a given pattern was initiated by Riehl (see for instance~\cite{LPRS16heaps}). By computer programming, she observed that the first terms of the sequence of increasing unary-binary trees with associated permutation avoiding~$213$ coincide with those of the sequence referenced \href{https://oeis.org/A166135}{\nolinkurl{A166135}} in the On-Line Encyclopedia of Integer Sequences, which was later proved in~\cite{BKKKKNW} to be the sequence with generating function~$G$. We show in the present work that this is indeed the case and thus answer \cite[Conjecture~5.2]{BKKKKNW}.

\begin{thm}\label{thmubt}
For any $n\ge 0$, the number of $n$-step basketball walks from~$0$ to~$1$ that are positive except at the origin is equal to the number of $n$-vertex increasing unary-binary trees with associated permutation avoiding~$213$.
\end{thm}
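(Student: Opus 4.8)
The plan is to prove the two families equinumerous by showing that the ordinary generating function $T(z)=\sum_{n\ge 1}t_n z^n$ counting $n$-vertex increasing unary-binary trees with $213$-avoiding associated permutation (call this class $\cT$) coincides with $G(z)$; a recursive decomposition of $\cT$ can then be upgraded to the announced bijection. I would organize the recursion by peeling the root, whose label is necessarily the global minimum~$1$ and therefore, sitting at the front of the breadth-first word, cannot take part in any occurrence of $213$.

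The easy cases are the leaf (contributing $z$) and the unary root: removing a unary root and decrementing all labels is a size-reducing bijection onto $\cT$, because deleting the leading minimum of a $213$-avoiding word leaves a $213$-avoiding word, and prepending a new global minimum can never create an occurrence of $213$ (the minimum cannot play the role of the middle-valued first letter). Marking unary nodes by an extra variable~$u$, this contributes $zu\,T(z)$ and already accounts for the refined statistic announced in the abstract. All the difficulty is concentrated in the binary root: its two subtrees are not read consecutively in breadth-first order but are interleaved level by level (the level-$i$ nodes of the left subtree preceding those of the right one for every~$i$), so the $213$-condition genuinely couples the two subtrees rather than factoring over them; this is why the binary contribution is not simply $zT^2$ (for $n=3$ it already equals~$2$, not~$1$).

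To untangle this I would use the standard structure of $213$-avoiding words: splitting at the global minimum~$m$ writes the word as $B\,m\,A$ with every letter of~$B$ larger than every letter of~$A$, and with $B,A$ themselves $213$-avoiding. Applied to the breadth-first word of a binary root, the minimum among the non-root labels sits at level~$1$, that is, at the left or the right child. If it is the right child, then $B$ consists of the single left-child label, which is forced to exceed every later label and hence to be the global maximum; the left subtree collapses to a single (maximally labeled) leaf while the right subtree ranges over $\cT$, contributing $z\,T(z)$. If it is the left child, then $B$ is empty and one is left with a level-shifted interleaving of the subtrees hanging below the old left root together with the whole right subtree. This residual shifted configuration is the crux: I would encode it by introducing a catalytic variable recording the depth offset between the two interleaved pieces, derive a functional equation for the corresponding refined (in $z$, $u$ and the catalytic variable) generating function, and solve it by the kernel method, in the spirit of the analysis of~$G$ in~\cite{BKKKKNW}.

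Finally I would identify the resulting series with~$G$: combining the three contributions gives $G=z\,\big(1+G+D\big)$, where $D$ is the generating function of valid two-subtree configurations, and the kernel-method solution of the equation for~$D$ should be matched against $1+G+G^2=\C$ together with $\C=1+z\C^2$, pinning down $T=G$ and hence $t_n=g_n$. The main obstacle is precisely the level-shifted interleaving at binary nodes: showing that the coupled $213$-condition reduces to a solvable one-catalytic-variable functional equation is where the real work lies, whereas the leaf and unary cases and the final identification with~$G$ are routine.
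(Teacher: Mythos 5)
Your plan correctly handles the leaf, the unary root (contribution $zu\,T$), and the binary root whose right child carries the minimum non-root label (forcing the left child to be a max-labeled leaf), and you correctly diagnose that the binary case does not factor as $zT^2$. But the remaining case --- binary root with the minimum at the left child --- is not a technical loose end to be dispatched later: it \emph{is} the theorem, and your proposal for it (``introduce a catalytic variable recording the depth offset, derive a functional equation, solve it by the kernel method'') is a hope, not an argument. Concretely, the depth offset between the forest hanging below the left child and the right subtree is constant (one level), so it carries no information; what actually determines the interleaved breadth-first word is the full level profile of both pieces --- how many vertices each has at every level --- which is an unbounded statistic, and the $213$-condition together with the increasing condition couples labels across all blocks of this interleaving. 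The residual configuration is therefore not a pair of smaller objects of the same class decorated by one integer parameter, and nothing in the proposal shows that the recursion closes in any finite number of catalytic variables. (Your final step, matching $D$ against $1+G+G^2=\C$, presupposes that $D$ has been computed, which is exactly the unresolved point.)

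The paper avoids root decomposition altogether, precisely because breadth-first-order constraints do not recurse along subtrees. It encodes the tree by its breadth-first ({\L}ukasiewicz-type) Motzkin walk and proves the structural fact your plan is missing (Lemma~\ref{lembits}): a labeling is valid if and only if the BFS permutation avoids $213$ \emph{and every node index is a right-to-left minimum}. By Lemma~\ref{lemavoid}, the number of such permutations then factorizes over the maximal runs of leaves, so tree-plus-permutation is equivalent to a \emph{decorated Motzkin walk} (Lemma~\ref{lemMotz}), each descending run carrying an independent $213$-avoiding permutation. From there everything is walk manipulation: $\cT=\squ(\cM)$ by a last-passage decomposition, $\cG=\squ(\cZ\cA)$ by~\eqref{GzA}, and $M=zA$ because both satisfy the same Lagrangian equation $X=z\bigl(1+X^2+X^2/(1-X)^2\bigr)$. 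In short, the decoupling of tree shape from permutation happens along the BFS order, not at the root; to salvage your recursion you would in effect have to rediscover Lemma~\ref{lembits}, at which point the walk encoding (and not the kernel method) is the natural way to finish.
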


\sv{In the extended version of this paper, we}\lv{We will} furthermore explain how to obtain an explicit bijection between the classes of objects appearing in Theorem~\ref{thmubt}. It is well known that permutations avoiding~$213$ are yet another class of combinatorial objects counted by Catalan numbers. We are thus interested in a family of trees carrying an extra combinatorial structure counted by Catalan. Of course, both structures are strongly dependent because of the condition of being increasing. \lv{We will however see in Lemma~\ref{lembits} that}\sv{In fact,} the two structures can be separated and the conditions the permutation has to fulfill are completely encoded in the sequence of bits obtained by reading the tree in breadth-first search order and recording whether the vertices encountered are nodes or leaves\footnote{Recall that a vertex is called a \emph{node} if it has at least one child and a \emph{leaf} otherwise.}.

As an immediate consequence of this theorem, for $n\ge 1$, the following expressions found in~\cite[Proposition~3.5]{BKKKKNW}
$$[z^n]G(z) =\frac{1}{n} \sum_{k=1}^{n} (-1)^{k+1} \binom{2k-2}{k-1} \binom{2n}{n-k}	= \frac{1}{n}\sum_{i=0}^{n}\binom{n}{i}\binom{n}{2n+1-3i}$$
also give the number of $n$-vertex increasing unary-binary trees with associated permutation avoiding~$213$. Moreover, \cite[Theorem~3.8]{BKKKKNW} gives an asymptotic estimate of this number.

Finally, we study a refinement of the previous formula, which takes as an extra parameter the number of unary nodes, that is, nodes with only one child. 

\begin{prop}\label{unaire}
For $n\ge 1$ and $0\le k\le \lfloor (n-1)/2\rfloor$, the number of $n$-vertex increasing unary-binary trees with associated permutation avoiding~$213$ that have exactly $n-1-2k$ unary nodes is equal to
$$\frac1n\,\binom{2n}{k} \binom{n-k}{k+1}\,.$$
In particular, we obtain yet another expression for $[z^n]G(z)$:
$$[z^n]G(z)=\frac{1}{n} \sum_{k\ge 0} \binom{2n}{k} \binom{n-k}{k+1}\,.$$
\end{prop}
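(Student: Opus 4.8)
The two statements are tightly linked: once the refined count is established, the second follows by summation, since Theorem~\ref{thmubt} identifies $[z^n]G(z)$ with the total number of $n$-vertex increasing unary-binary trees whose associated permutation avoids~$213$, and $\sum_k\frac1n\binom{2n}{k}\binom{n-k}{k+1}$ is precisely that total. So the plan is to concentrate on the refined formula. A preliminary normalization makes the bookkeeping cleaner: in a unary-binary tree on~$n$ vertices the numbers $(n_0,n_1,n_2)$ of leaves, unary nodes and binary nodes satisfy $n_0+n_1+n_2=n$ and $n_1+2n_2=n-1$, so that having exactly $n-1-2k$ unary nodes is equivalent to having exactly $k$ binary nodes and $k+1$ leaves. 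I will therefore count the trees with $k$ binary nodes, $k+1$ leaves and $n-1-2k$ unary nodes.

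First I would invoke the separation established in Lemma~\ref{lembits}: reading the tree in breadth-first order produces a sequence of bits (node/leaf) which alone governs the $213$-avoidance of the associated permutation, independently of how the nodes split into unary and binary ones. Concretely, for a fixed breadth-first bit sequence~$B$, the set of increasing labelings whose breadth-first reading avoids~$213$ depends only on~$B$, while the possible unary/binary refinements of~$B$ form a separate, independent choice. This lets me factor the enumeration as a sum over~$B$ of (the number of $213$-avoiding increasing labelings compatible with~$B$) times (the number of unary/binary refinements of~$B$ with $k$ binary nodes). The factor $\binom{n-k}{k+1}$ is the transparent one: once the binary nodes are fixed, it records which of the $n-k$ remaining (non-binary) vertices are the $k+1$ leaves, the others being unary.

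The remaining factor $\frac1n\binom{2n}{k}$ is where the cycle lemma must enter, as its non-integrality signals that it is not a standalone cardinality and that the division by~$n$ can be performed only on the combined object. I would realize the remaining structure (the $213$-avoiding permutation together with the binary skeleton, or equivalently, through Theorem~\ref{thmubt}, the corresponding basketball walk) as a family of marked lattice paths of length $2n$ carrying $k$ distinguished steps, enumerated without positivity constraint by $\binom{2n}{k}$; the length~$2n$ should arise from resolving each of the $n$ steps into two elementary half-steps. Introducing an arbitrary marked origin among the $n$ natural reference points and checking that the relevant ballot/positivity condition is satisfied by exactly one representative in each orbit of the induced free $\mathbb{Z}/n\mathbb{Z}$-action then produces the factor $\frac1n$. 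Collecting the three factors gives $\frac1n\binom{2n}{k}\binom{n-k}{k+1}$, and summing over~$k$ yields, by Theorem~\ref{thmubt}, the value $[z^n]G(z)$, which is the second displayed identity.

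The main obstacle is pinning down this encoding precisely: identifying the length-$2n$ path model, the marks of sizes $k$ and $k+1$, and the exact condition whose unique rotation carries the $\frac1n$; the ensuing binomial manipulations are routine. An alternative, purely generating-function route would bypass the cycle lemma: deform the relation $1+G(z)+G^2(z)=\C(z)$ into a bivariate algebraic equation marking unary nodes — either directly from the tree decomposition furnished by Lemma~\ref{lembits} or via the kernel method on the basketball walks of Theorem~\ref{thmubt} — and then apply Lagrange inversion. In that approach the same obstacle reappears as the task of finding the correct $u$-deformation of the functional equation, after which extracting $[z^n]$ is a mechanical computation returning $\frac1n\binom{2n}{k}\binom{n-k}{k+1}$.
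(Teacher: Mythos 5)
Your reduction of the second display to the first (summation plus Theorem~\ref{thmubt}) and your normalization ($n-1-2k$ unary nodes $\Leftrightarrow$ $k$ binary nodes and $k+1$ leaves) are both correct, but the core of your argument has two genuine gaps. First, the factorization you build on is false as stated: for a fixed breadth-first bit sequence~$B$, the unary/binary refinements of the node positions are \emph{not} an independent, unconstrained choice, because the resulting arity sequence must still be the breadth-first ({\L}ukasiewicz-type) code of an actual tree, a positivity condition that depends on where the node positions sit inside~$B$ (for instance, with $n=5$ the bit sequence $(N,N,N,L,L)$ admits three refinements with one binary node, while $(N,N,L,N,L)$ admits only two). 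Likewise, by Lemmas~\ref{lembits} and~\ref{lemavoid}, the number of admissible permutations for~$B$ is a product of Catalan numbers indexed by the gaps between consecutive node positions, not a quantity depending only on the number of nodes. So the total count is $\sum_B N_{\mathrm{perm}}(B)\,N_{\mathrm{refine}}(B,k)$, a sum of products that does not visibly split into $\binom{n-k}{k+1}$ times $\frac1n\binom{2n}{k}$. Relatedly, your reading of $\binom{n-k}{k+1}$ as ``choose which non-binary vertices are the leaves'' is not what actually occurs: in the correct computation this factor arises as $[y^{n-1-2k}](1-uy)^{-(k+2)}$, i.e., as a stars-and-bars count distributing the $n-1-2k$ unary nodes among $k+2$ slots.

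Second, and more decisively, the step that would carry all the content of your main route --- the length-$2n$ path model, the meaning of the $k$ marks, and the positivity condition admitting exactly one good rotation per $\mathbb{Z}/n\mathbb{Z}$-orbit --- is exactly what you leave open (``the main obstacle''); without it there is no proof. The paper in fact takes the route you relegate to a closing remark: by Lemma~\ref{lemMotz}, unary nodes are precisely the $0$-steps of the encoding decorated Motzkin walk, and re-running the proof of Theorem~\ref{thmubt} with a weight~$u$ per $0$-step gives $T=M+uTM$ and $M=z\,(1+TM)^2$, hence $T=\psi(M)$ and $M=z\,\phi(M)$ with $\psi(y)=y/(1-uy)$ and $\phi(y)=\bigl(1+y^2/(1-uy)\bigr)^2$. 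Lagrange inversion then yields $[z^n]T=\frac1n[y^{n-1}]\psi'(y)\phi(y)^n=\frac1n\sum_k\binom{2n}{k}\binom{n-k}{k+1}u^{n-1-2k}$, where $\binom{2n}{k}$ comes from expanding the $2n$-th power (your intuition about the exponent $2n$ was on target) and $\binom{n-k}{k+1}$ from the negative binomial series. So the ``$u$-deformation'' you flag as an obstacle in the generating-function route is immediate once the decorated-Motzkin encoding is in hand; it is the purely bijective, cycle-lemma version that remains unexecuted in your proposal.
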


In the light of Theorem~\ref{thmubt}, it is natural to try and find the statistics corresponding to unary nodes on basketball walks: it is the number of what we call \emph{staggered $\pm2$-steps} (see Figure~\ref{fig:staggered}).

\begin{defi}\label{def:staggered}
Let $(w_0,w_1, \ldots, w_n)$ be the successive heights taken by a basketball walk and let, for all integers $1\le i\le n$, $u_i\de w_i-w_{i-1}$ be the value of its $i$-th step. For $i<j$, we say that~$u_i$ and~$u_j$ are paired if $u_i=+2$, $u_j=-2$, $w_j = w_{i-1}$, and for all $i<k<j$, we have $w_k\ge w_{i-1}+1$. A \emph{staggered $\pm 2$-step} is a $\pm2$-step that is not paired with any other $\pm2$-step.
\end{defi}

\begin{figure}[ht]
	\centering\includegraphics[width=.4\linewidth]{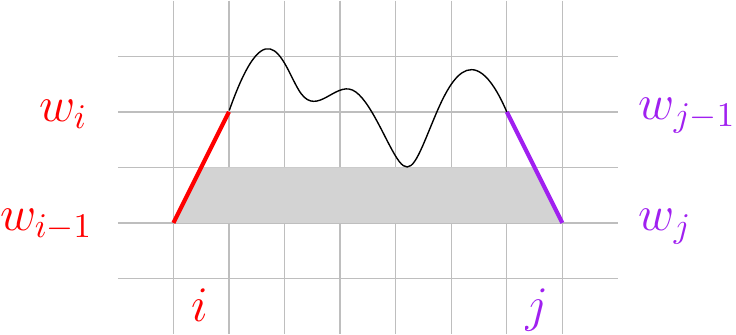}
	\caption{The $i$-th and $j$-th steps of this walk are paired (the open gray area must not be intersected by the walk). A \emph{staggered $\pm 2$-step} is a $\pm2$-step that is not paired with any other $\pm2$-step.}
	\label{fig:staggered}
\end{figure}

\begin{prop}\label{propstag}
For $n\ge 1$ and $m\ge 0$, the number of $n$-step basketball walks from~$0$ to~$1$ that are positive except at the origin and 
that contain exactly~$m$ staggered $\pm2$-steps is equal to the number of $n$-vertex increasing unary-binary trees with associated permutation avoiding~$213$ that have exactly~$m$ unary nodes.
\end{prop}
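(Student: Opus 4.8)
The plan is to reduce the statement to a single generating-function computation on the walk side and then invoke Proposition~\ref{unaire}. Since that proposition already gives $\frac1n\binom{2n}{k}\binom{n-k}{k+1}$ for the number of $n$-vertex good trees with $n-1-2k$ unary nodes, it suffices to prove that the number of $n$-step basketball walks from~$0$ to~$1$ that are positive except at the origin and have exactly $m=n-1-2k$ staggered $\pm2$-steps equals the same quantity, and then to compare the coefficients of $u^{m}$.

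The first step is to understand the pairing of Definition~\ref{def:staggered} structurally. I would show that it is a well-defined \emph{nested} partial matching of the $\pm2$-steps: no $+2$-step can be paired with two distinct $-2$-steps (a second partner would force the walk to touch its floor~$w_{i-1}$ strictly inside the first arch, contradicting the height condition), and symmetrically for $-2$-steps. This yields a convenient first-passage characterization: a $+2$-step rising from height $a$ to $a+2$ is matched if and only if the first time the walk returns to height $\le a$ afterwards it does so by a $-2$-step landing exactly on~$a$; in every other case (a first return to~$a$ via a $-1$-step, a jump strictly below~$a$, or the walk ending before returning) the step is staggered. The crucial point for the enumeration is that being staggered then becomes a property that can be read off \emph{locally} along a recursive, level-by-level decomposition of the walk.

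With this in hand I would set up the refined generating function $G(z,u)$ of these walks, with $z$ marking steps and $u$ marking staggered $\pm2$-steps, through an arch decomposition: each matched pair encloses an excursion that stays one unit above its floor and that is again a walk of the same type, so contracting matched arches expresses $G(z,u)$ through a finite system of equations for excursions measured relative to a floor (the few heights near the floor must be distinguished because $\pm2$-steps behave specially there). A staggered $\pm2$-step contributes a factor $zu$, while a matched pair contributes $z^2$ times the generating function of its interior. Solving this kernel-type system and extracting $[z^n]\,G(z,u)$, I expect to recover exactly $\sum_{k}\frac1n\binom{2n}{k}\binom{n-k}{k+1}\,u^{n-1-2k}$, whose specialization at $u=1$ is consistent with the formula for $[z^n]G(z)$ in Proposition~\ref{unaire}; matching the coefficient of $u^{m}$ then finishes the proof.

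The main obstacle is precisely this refined enumeration: the pairing is a \emph{global} feature of the walk, so the delicate part is to convert the word ``staggered'' into a purely local generating-function contribution (this is the role of the first-passage characterization) and then to solve the resulting system and simplify its coefficients into the product formula above. A shorter, more conceptual alternative that I would also pursue is to track the statistic through the explicit bijection underlying Theorem~\ref{thmubt}, checking that an unmatched $\pm2$-step is always sent to a unary node and conversely. I would caution, however, that matched pairs are \emph{not} individually sent to binary nodes: already for $n=3$ the walks with steps $+1,+1,-1$ and $+1,+2,-2$ both have no staggered step yet carry different numbers of matched pairs, so only the staggered/unary correspondence is preserved, and it is exactly that correspondence which must be verified.
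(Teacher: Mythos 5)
Your high-level strategy---refining the generating function of $G$-walks with a variable $u$ marking staggered $\pm2$-steps and comparing with the tree side---is the same as the paper's, and your first-passage characterization of matched $+2$-steps (matched if and only if the first return to height $\le a$ after the step is a $-2$-step landing exactly on $a$) is correct; it is in essence what justifies the paper's decomposition. But the proposal stops exactly where the mathematical work begins. You never write down the ``finite system of equations'' that your arch-contraction is supposed to produce, and the decisive claim---that solving it and extracting coefficients yields $\frac1n\sum_k\binom{2n}{k}\binom{n-k}{k+1}u^{n-1-2k}$---is announced as what you ``expect to recover,'' not proved; that computation is the entire content of the proposition. Two technical points are also glossed over: first, the interior of a matched arch is a nonnegative walk from $1$ to $1$ above its floor, not a walk ``of the same type'' as a $G$-walk, so your system necessarily involves auxiliary classes; second, one must verify that the staggered statistic is compatible with whatever decomposition is used (a step inside an arch cannot be paired with a step outside it, staggeredness must be invariant under the reversals that occur, and some steps that are staggered within a factor become matched in the concatenation). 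These verifications are precisely where the difficulty lies.

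For comparison, the paper carries out this program with the classes $\cA$ (excursions) and $\cB$: it shows $B=z+uzAB$, i.e.\ the first $+2$-step of a nontrivial $B$-walk is always staggered, and decomposes an $A$-walk at its first return to $0$ into five cases, observing that all staggered steps survive in the factors except in the case where the walk visits $1$ before its first return to $0$ with both $B$-factors nontrivial: there the two initial $+2$-steps of those $B$-factors become matched with each other in the big walk, which removes exactly two $u$-marks. This yields $zA=z\bigl(1+\frac{(zA)^2}{1-uzA}\bigr)^{2}$, the same Lagrangian equation satisfied by $M(z,u)$ (with $u$ marking $0$-steps of decorated Motzkin walks), whence $zA=M$ and then $G=AB=\frac{zA}{1-uzA}=\frac{M}{1-uM}=T$. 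In particular, the paper never needs Proposition~\ref{unaire} or any coefficient extraction: equality of the bivariate series follows from equality of their defining equations, so your planned detour through explicit binomial coefficients is avoidable even if it could be made to work. Your closing caution that matched pairs do not correspond to binary nodes is correct and worth keeping, but it concerns the bijective alternative you mention, not the generating-function argument you left incomplete.
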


Proposition~\ref{unaire} gives a closed formula for these numbers. Along the same lines, we obtain a similar expression for basketball excursions instead of $G$-walks.
\begin{prop}\label{propstagA}
For $n\ge 2$, the number of $(n-1)$-step nonnegative basketball walks from~$0$ to~$0$ is\lv{ equal to}
$$\frac1{n}\sum_{k\geq 0} \binom{2n}{k} \binom{n-k-2}{k-1}$$
and, for $k\geq 0$, the term of index~$k$ in the previous sum is the number of walks with exactly $n-1-2k$ 
staggered $\pm2$-steps.
\end{prop}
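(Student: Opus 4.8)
The plan is to reduce the count of nonnegative excursions to that of $G$-walks, whose refined enumeration is already provided by Proposition~\ref{propstag} together with Proposition~\ref{unaire}, and then to extract coefficients. Let $A=A(z,u)$ be the generating function of nonnegative basketball walks from~$0$ to~$0$, with $z$ marking steps and $u$ marking staggered $\pm2$-steps, and let $G=G(z,u)$ be the corresponding refined series for $G$-walks, so that $[z^{n-1}u^{m}]A$ is the quantity we want while $[z^{n}u^{m}]G$ is known. First I would set up the bijective decomposition of a $G$-walk according to its first step: either it starts with a $+1$-step and is completed by a nonnegative excursion sitting at height~$1$, or it starts with a $+2$-step, followed by the reversal of a $G$-walk run down to its first visit of height~$1$, and then by a nonnegative excursion at height~$1$. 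This gives $G=zA+uzGA$, that is,
\[ A=\frac{G}{z\,(1+uG)}, \qquad\text{equivalently}\qquad zA=\frac{G}{1+uG}. \]

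The delicate point — and the step I expect to be the main obstacle — is to verify that the marking variable~$u$ sits exactly where written, i.e.\ that the staggered statistic is transported correctly. Three facts are needed. The leading $+2$-step in the second case is \emph{always} staggered: a $G$-walk never returns to~$0$ after the origin, so this step can never be paired, which accounts for the single extra factor~$u$. The number of staggered $\pm2$-steps is invariant under reversal, so the reversed $G$-walk block contributes the unchanged series~$G$. And no pair can straddle the junctions of the decomposition, because at each junction the walk is at height exactly~$1$, which is too low to lie strictly inside a pair based at a height~$\ge1$. Granting these, the staggered counts are additive over the three blocks and the displayed identity holds at the refined level; specialising $u=1$ recovers the cleaner relation $z(1+G)A=G$, equivalently $AC=G^{2}$.

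It then remains to extract $[z^{n-1}u^{m}]A=[z^{n}u^{m}](zA)$ from $zA=G/(1+uG)$, which I would do by reusing the machinery behind Proposition~\ref{unaire}. At $u=1$ one has the rational parametrization $z=x(1-x)$, $\C=1/(1-x)$ and $G(1+G)=x/(1-x)$, so that $zA$ becomes an explicit algebraic function of~$x$ built from the same discriminant $\sqrt{(1+3x)(1-x)}$ that governs~$G$. Lagrange inversion with the kernel $(1-x)^{-n}$, applied to this common square-root, produces the shared prefactor $\frac1n\binom{2n}{k}$ after a standard hypergeometric simplification; only the accompanying rational factor changes, turning the $\binom{n-k}{k+1}$ of Proposition~\ref{unaire} into $\binom{n-k-2}{k-1}$. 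This is precisely why the two formulas share the head $\frac1n\binom{2n}{k}$ and differ only in the second binomial.

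Finally, the total-count assertion is the specialisation $u=1$, while the term-by-term assertion is the $u$-refinement: carrying the catalytic variable through the (now quartic) functional equation satisfied by $G(z,u)$, the same Lagrange-inversion scheme splits the sum so that the $k$-th summand counts the excursions with exactly $n-1-2k$ staggered $\pm2$-steps. Before committing to the general computation I would cross-check small cases to fix the normalisation and the indexing $m=n-1-2k$: for $n=3$ the two $2$-step excursions $(0,1,0)$ and $(0,2,0)$ both have no staggered step, and for $n=4$ the two $3$-step excursions $(0,1,2,0)$ and $(0,2,1,0)$ each have exactly one. The genuinely hard parts remain the staggered-step bookkeeping in the first step and the hypergeometric simplification of the Lagrange coefficients.
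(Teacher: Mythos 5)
Your first step is correct, and it is essentially the relation the paper itself uses: your $zA=G/(1+uG)$ is equivalent to $G=zA/(1-uzA)$, which the paper obtains in the proof of Proposition~\ref{propstag} from $G=AB$ and $B=z+uzAB$, and your three verifications (the leading $+2$-step of a $G$-walk can never be paired, reversal preserves the staggered count, no pair straddles a junction at height~$1$) are exactly the needed bookkeeping. The genuine gap is in the second half: the coefficient extraction, which is the entire computational content of Proposition~\ref{propstagA}, is never carried out, and the route you sketch cannot work as described. Knowing $[z^nu^m]G$ from Propositions~\ref{propstag} and~\ref{unaire} does not determine $[z^nu^m]\bigl(G/(1+uG)\bigr)$: since $G/(1+uG)=\sum_{j\ge 0}(-u)^jG^{j+1}$, you would need the coefficients of \emph{every power} of~$G$, not just of~$G$ itself. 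Your fallback is a parametrization valid only at $u=1$, finished by an unexecuted ``standard hypergeometric simplification'', and the refined, term-by-term assertion --- which is the actual point of the statement --- is dispatched in a single sentence invoking a ``quartic functional equation satisfied by $G(z,u)$'' that you never write down. As it stands, neither the formula $\frac1n\binom{2n}{k}\binom{n-k-2}{k-1}$ nor its combinatorial interpretation is proved.

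The missing ingredient is a Lagrangian equation for $A(z,u)$ itself. The paper derives one by decomposing an excursion at its first return to~$0$ (empty; or $+2$-step, $A$-walk, $-2$-step, $A$-walk; or visiting~$1$ before the first return, with subcases according to whether the two $B$-walks are trivial), tracking which $\pm2$-steps remain staggered; this yields $zA=z\bigl(1+\frac{(zA)^2}{1-uzA}\bigr)^{2}$, i.e.\ $zA$ satisfies the same equation~\eqref{eq:M} as~$M$, after which Proposition~\ref{propstagA} is two lines of Lagrange inversion: $[z^{n}]\,zA=\frac1n[y^{n-1}]\bigl(1+\frac{y^2}{1-uy}\bigr)^{2n}=\frac1n\sum_{k}\binom{2n}{k}\binom{n-k-2}{k-1}u^{n-1-2k}$, which proves both assertions at once. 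Alternatively, you can finish along your own lines with no new walk decomposition: Proposition~\ref{propstag}, in its bivariate form together with $T=M+uTM$ from the proof of Theorem~\ref{thmubt}, gives $G=M/(1-uM)$, hence $1+uG=1/(1-uM)$ and your relation collapses to $zA=M$; then apply Lagrange inversion to~\eqref{eq:M} with output function $y$ instead of the $y/(1-uy)$ used in Proposition~\ref{unaire}. Either way, the binomial formula must be computed from a Lagrangian equation; it does not follow from the statements of Propositions~\ref{propstag} and~\ref{unaire} alone.
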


The remainder of the paper is organized in two sections. In Section~\ref{secbin}, we present and study the bijection between $C$-walks and binary trees. In Section~\ref{secubt}, we investigate the link between $G$-walks and increasing unary-binary trees with associated permutation avoiding~$213$. \sv{In this short version, we present the bijection and prove Theorem~\ref{thmubt}; the proofs of the other statements can all be found in the extended version.}

\lv{\begin{ack}
We thank the ALEA in Europe Young Researcher's Workshop, which enabled us to start working on this problem. We thank Michael Wallner for mentioning this problem to us during this meeting and the other participants that initiated the discussion on this topic. J.B.\ also acknowledges partial support from the GRAAL grant ANR-14-CE25-0014.
\end{ack}}

\section{Basketball walks and binary trees}\label{secbin}

\subsection{Generalities}\label{secgen}

Throughout this paper, we will denote generating functions by capital letters. The class of combinatorial objects counted by a generating function will be denoted by the same letter with a calligraphic font. For instance, the generating function~$A$ will count the objects of the combinatorial class~$\cA$. Moreover, we denote by~$\un$ (resp.~$\cZ$) the combinatorial class whose only element has size~$0$ (resp.\ size~$1$), that is, the combinatorial class corresponding to the generating function~$z\mapsto 1$ (resp.~$z\mapsto z$). Finally, for a class~$\cX$ with no object of size~$0$, we denote by $\sq(\cX)$ the combinatorial class of sequences of elements of~$\cX$, that is $\un+\cX+\cX^2+\ldots$, and by $\sqk(\cX)=\cX^k\sq(\cX)$ the class of sequences of at least~$k$ elements of~$\cX$.

Let us start by introducing the following combinatorial classes of basketball walks\lv{ (recall that this means walks with steps in $\{-2,-1,+1,+2\}$)}:
\begin{itemize}
	\item $\cC$ is the set of walks from~$0$ to~$0$ that visit~$1$ and are positive except at the extremities\lv{ (``$\cC$'' stands for Catalan as these objects are counted by the Catalan numbers)};
	\item $\cA$ is the set of excursions, that is, nonnegative walks from~$0$ to~$0$;
	\item $\cB$ is the set of walks from~$0$ to~$1$ that visit~$0$ and~$1$ only at the extremities.
\end{itemize}
The corresponding generating functions~$C$, $A$ and~$B$ count walks of these classes with a weight~$z$ per step. We also call $X$-walk an element of a class~$\cX$ of walks. 

\lv{
\bigskip
In this section, we will obtain a decomposition grammar (in the spirit of~\cite{labelle1990generalized,duchon2000enumeration}) relating the classes of basketball walks introduced above. By a few manipulations we will deduce from the grammar that the class
$\cC$ satisfies $\cC=(\cZ+\cC)^2$, and is thus a Catalan class.
}

\begin{notn}
It will sometimes be convenient to see walks as sequences of steps. Note that there is a slight abuse in such a notation, as we forget the value of the origin of the walks; anywhere we use this notation, the initial value of the walk will be implicit or irrelevant. We will denote by $\bar\fa\de(-a_n,\ldots,-a_1)$ the reverse of a walk $\fa=(a_1,\ldots,a_n)$ and by $\fa\fb\de(a_1,\ldots,a_n,b_1,\ldots,b_m)$ the concatenation of~$\fa$ and $\fb=(b_1,\ldots,b_m)$.
\end{notn}

We may notice right away that the class~$\cG$ of basketball walks from~$0$ to~$1$ that are positive except at the origin, introduced in Section~\ref{secintro}, satisfies $\cG=\cB\cA$ by splitting any $G$-walk at the first time it reaches height~$1$. Moreover, we can see that $\cB=\cZ \sq(\cZ\cA)$ as follows\lv{ (see Figure~\ref{walkdec}\textbf{b})}. A $B$-walk is either trivial, that is, reduced to a single $+1$-step, or starts with a $+2$-step, forms an $A$-walk between the first and last time it visits~$2$, and then forms a reversed $B$-walk from its last visit of~$2$ to its end. As a result, $\cB=\cZ+\cZ\cA\cB$, and the claim follows. This implies that
\begin{equation}\label{GzA}
\cG=\cZ\cA+\cZ\cA\cG\,,\qquad\text{ so that }\qquad \cG=\squ(\cZ\cA).
\end{equation}
In the remainder of Section~\ref{secbin}, we will focus on~$\cC$ and no longer on~$\cG$. We will come back to~$\cG$ and use~\eqref{GzA} in Section~\ref{secubt}.\lv{ Decomposing $A$-walks, we obtain the following identity\lv{ (see Figure~\ref{walkdec}\textbf{a})}
\begin{equation}\label{eq:idA}
\cA=\un+\cZ\cA\cZ\cA+\cB\cA\cB\cA,
\end{equation} 
which corresponds to the fact that an $A$-walk is either empty, or does not visit~$1$ before its first return to~$0$, or does visit~$1$ before its first return to~$0$. 

\lv{
\begin{figure}[ht]
	\centering\includegraphics[width=.95\linewidth]{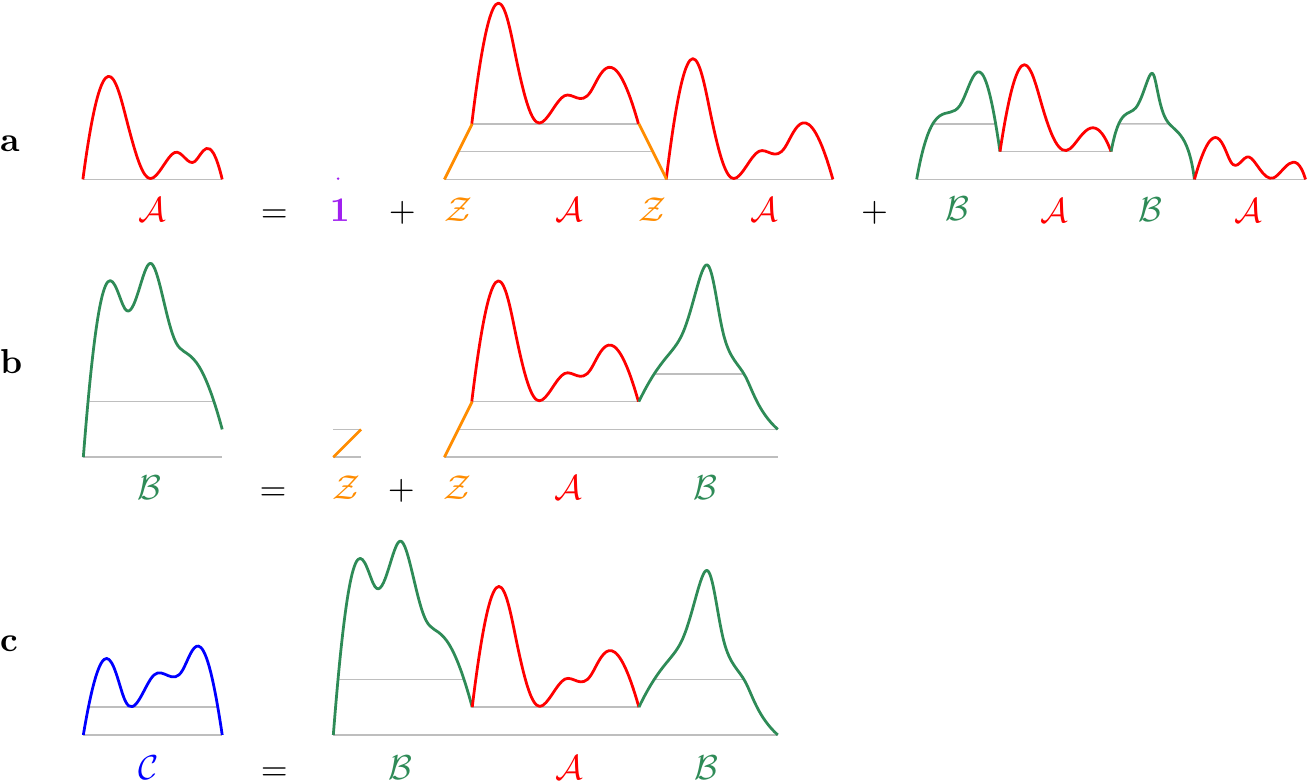}
	\caption{Decomposition of an $A$-walk (\textbf{a}), a $B$-walk (\textbf{b}) and a $C$-walk (\textbf{c}). The horizontal lines are here to emphasize the facts that $A$-walks are nonnegative, $B$-walks only visit~$0$ and~$1$ at their extremities and $C$-walks only visit~$0$ at their extremities. Beware, however, that the walk $(+1)$ is a $B$-walk.}
	\label{walkdec}
\end{figure}
}

Splitting a $C$-walk at the first and last time it visits~$1$, we see that $\cC=\cB\cA\cB$\lv{ (see Figure~\ref{walkdec}\textbf{c})}. We claim that the  three identities $\cB=\cZ \sq(\cZ\cA)$, \eqref{eqC}, and $\cC=\cB\cA\cB$ suffice to show that~$\cC$ satisfies $\cC=(\cZ+\cC)^2$ and thus is the Catalan class whose generating function is given by~\eqref{eqC}. Indeed, if we multiply~\eqref{eq:idA} by~$\cB^2$, we obtain
$$\cB\cA\cB=\cB^2+(\cZ\cA\cB)^2+(\cB\cA\cB)^2\quad\text{ and thus }\qquad\cC=\cB^2+(\cZ\cA\cB)^2+\cC^2.$$
Now observe that, for an arbitrary combinatorial class~$\cR$ with no object of size~$0$, we have
\begin{align*}
\sq(\cR)^2+\big(\cR\sq(\cR)\big)^2&=\sq(\cR)\sq(\cR)+\squ(\cR)\squ(\cR)\\
&=\un\!\cdot\!\un+\squ(\cR)\!\cdot\!\un+\sq(\cR)\squ(\cR)+\squ(\cR)\squ(\cR)\\
&=\un+\squ(\cR)\sq(\cR)+\sq(\cR)\squ(\cR)=\un+2\cR\ \!\sq(\cR)^2,
\end{align*}
To obtain the second equality, we split $\sq(\cR)^2$ into three terms and we recombined the second and forth terms in order to obtain the third equality. As $\cB=\cZ\sq(\cZ\cA)$, the latter identity applied to $\cR=\cZ\cA$ yields
\begin{align*}
\cB^2+(\cZ\cA\cB)^2&=\cZ^2\big(\un+2\cZ\cA\sq(\cZ\cA)^2\big)=\cZ^2+2\cZ\cA\cB^2=\cZ^2+2\cZ\cC,
\end{align*}
so that we obtain $\cC=\cZ^2+2\cZ\cC+\cC^2=(\cZ+\cC)^2$ as claimed. 

\lv{
\bigskip
If we now mark each $\pm 1$-step by~$\cZ_1$ and each $\pm 2$-step by~$\cZ_2$, then it is easy to see that the identities above become
$$\cA=\un+\cZ_2\cA\cZ_2\cA+\cB\cA\cB\cA\,,\qquad \cB=\cZ_1\sq(\cZ_2\cA)\,,\qquad \cC=\cB\cA\cB\,,$$
so that~$\cC$ now satisfies
\begin{align*}
\cC&=\cB^2+\big(\cZ_2\cA\cB\big)^2+\cC^2=\cZ_1^2+2\cZ_2\,\cC+\cC^2.
\end{align*}
In a binary tree, we define a \emph{double leaf} to be a leaf whose sibling is also a leaf and a \emph{simple leaf} to be a leaf whose sibling is a node. The previous identity corresponds to the decomposition grammar of binary trees where each double leaf is marked by~$\cZ_1$ and each simple leaf is marked by~$\cZ_2$. 
}

\bigskip
Note that a simple leaf is either the left child or the right child of its parent; it is called a \emph{left leaf} of a \emph{right leaf} accordingly. In the next section, we present a recursive bijection between~$\cC$ and binary trees that essentially implements the above decomposition,\lv{ (thus mapping $\pm 1$-steps to double leaves and $\pm 2$-steps to simple leaves),}  along with suitable reversal operations in order to further map the two types of simple leaves (left or right) of the binary tree to explicit (parity-dependent) types of $\pm 2$-steps on the corresponding $C$-walk. 
}

\subsection{The bijection}\label{secbij}

We will now rewrite the grammar of the basketball walks we consider in a way that reflects more faithfully the Catalan decomposition. Recall\lv{ from Figure~\ref{walkdec}\textbf{c}} that $\cC=\cB\cA\cB$. This decomposition will be intensively used in what follows.

As a $C$-walk starts and ends at~$0$, is positive in between and has steps of height at most~$2$, it makes sense to focus on the sequence of heights it reaches, restricted to the set $\{1,2\}$. For instance, the sequence corresponding to the walk taking the consecutive values $(0,2,3,4,2,3,1,2,3,5,4,2,\allowbreak 3,1,3,1,2,0)$ is $(2,2,1,2,2,1,1,2)$. We classify $C$-walks into~$4$ classes according to this sequence of values as follows (the boxed values are mandatory\footnote{For instance, a walk in class~\ref{cas2} will visit~$2$ a nonnegative number of times, then~$1$ for the first time, then~$2$ at least once before it visits~$1$ for the second time and then is free to visit~$1$ and~$2$ an arbitrary number of times and in any order.}, each~$x$ can be either~$1$ or~$2$). See Figure~\ref{figbij}.
\begin{enumerate}[label=(\textit{\roman*})]
	\item $\boxd{1}\,$;\label{cas1}
	\item $2\ldots 2\ \boxd{1} \boxd{2}\ 2 \ldots 2 \ \boxd{1}\ x \ldots x\,$;\label{cas2}
	\item $2\ldots 2\ \boxd{1} \boxd{2}\ 2 \ldots 2 \,$;\label{cas3}
	\item $2\ldots 2\ \boxd{2} \boxd{1}$ or $2\ldots 2\ \boxd{1} \boxd{1}\ x \ldots x \,$.\label{cas4}
\end{enumerate}

Let us analyze more closely these different classes and, at the same time, define a function $\Phi:\cC\to(\cC\cup\{\eps\})^2$, where~$\eps$ is a formal symbol. The function~$\Phi$ will be the starting point of the bijection. We refer the reader to Figure~\ref{figbij} for a visual aid.

\sv{
\begin{figure}[ht!]
	\centering\includegraphics[width=.95\linewidth]{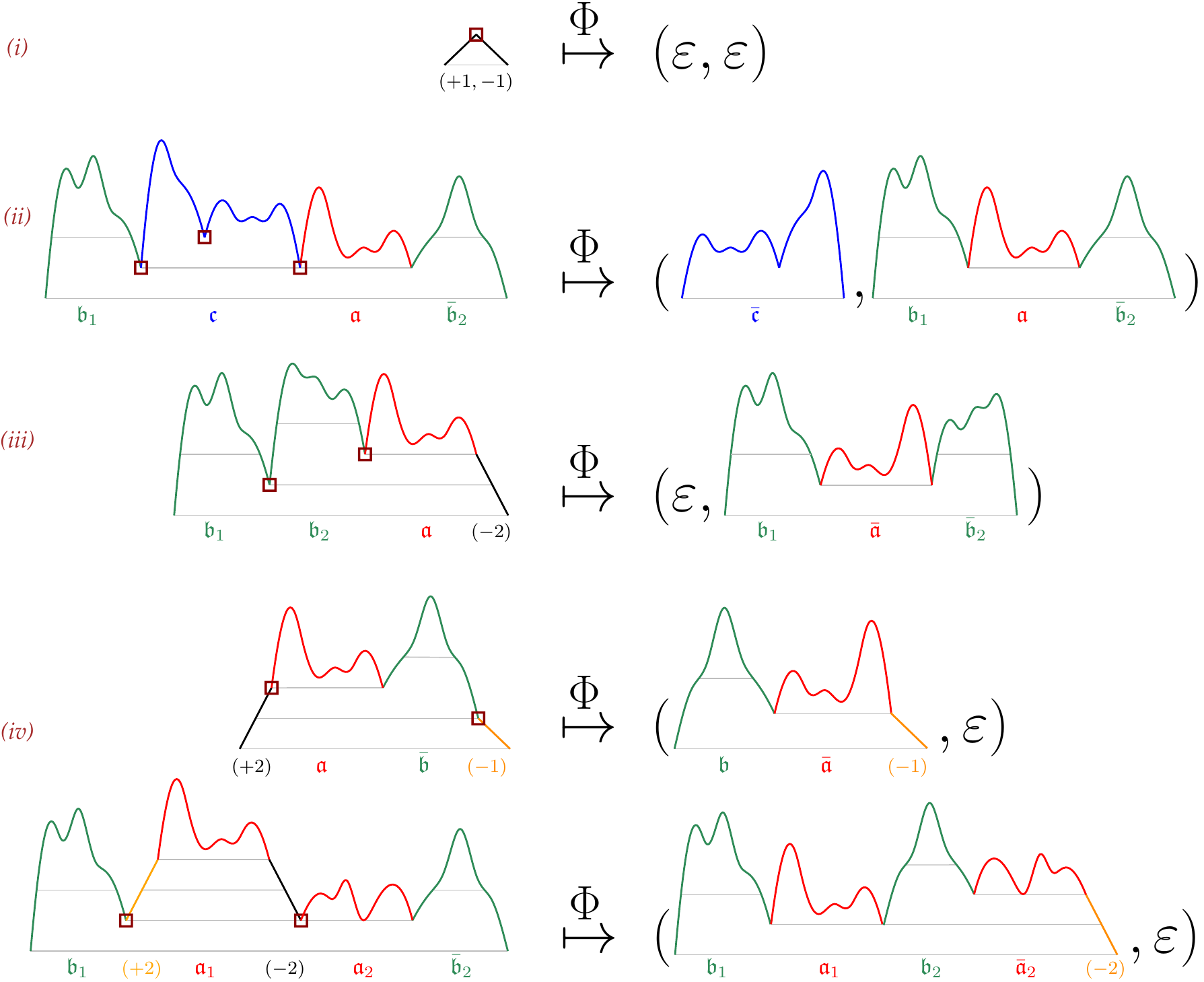}
	\caption{The four different classes and the definition of the function~$\Phi$. Beware of our representation of $B$-walks; recall that $(+1)$ is a $B$-walk. The reversal operation on some $C$-walks and $A$-walks was done in order to preserve the statistics of Proposition~\ref{propstats}.}
	\label{figbij}
\end{figure}
}
\lv{
\begin{figure}[ht!]
	\centering\includegraphics[width=.95\linewidth]{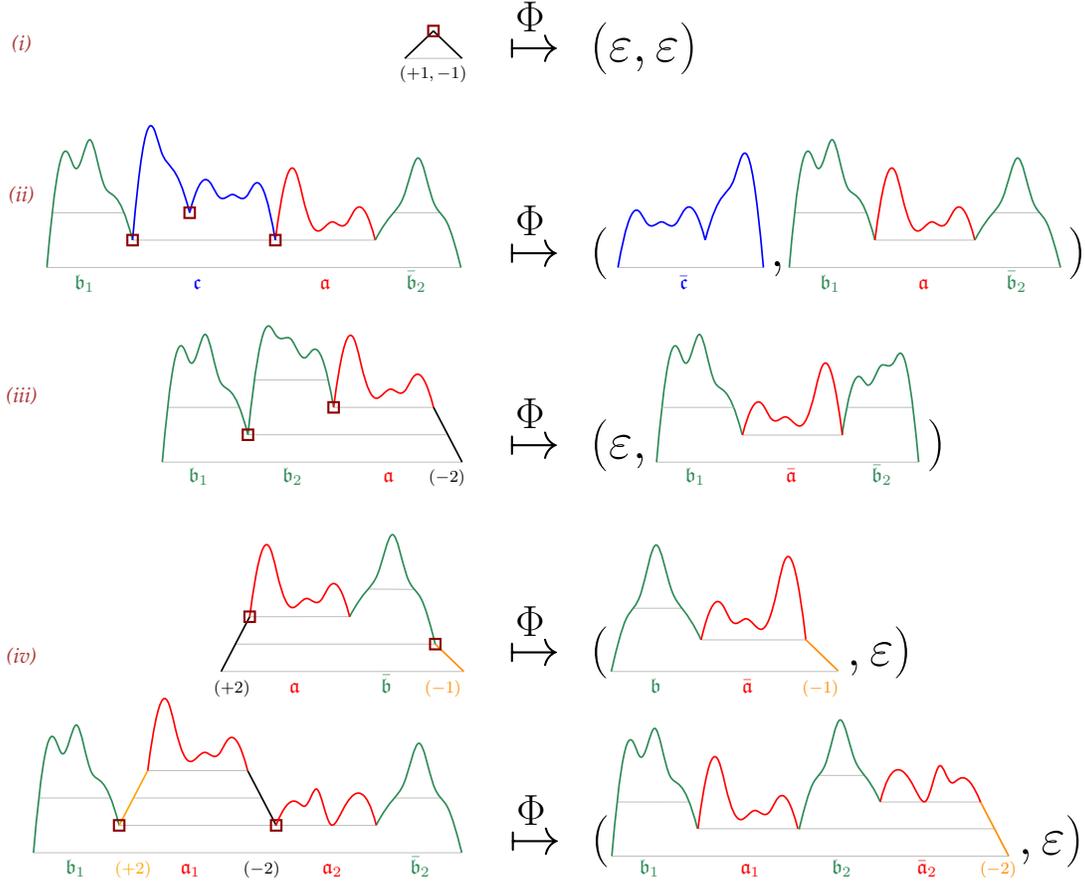}
	\caption{The four different classes and the definition of the function~$\Phi$. The reversal operation on some $C$-walks and $A$-walks was done in order to preserve the statistics of Proposition~\ref{propstats}. For the purposes of Section~\ref{secbij}, an alternate definition of~$\Phi$ without these reversal operations also works. However, the choices of reversals we made will be crucial for Section~\ref{secstat}.}
	\label{figbij}
\end{figure}
}

\paragraph{Class~\ref{cas1}.}
The only walk in class~\ref{cas1} is $(+1,-1)$; we define $\Phi\big((+1,-1)\big)\de(\eps,\eps)$.

\paragraph{Class~\ref{cas2}.}
Such a walk is the concatenation of three subwalks: a $B$-walk $\fb_1$ \big($2\ldots 2\ \boxd{1}$\big), a $C$-walk~$\fc$ \big($\boxd{1} \boxd{2}\ 2 \ldots 2 \ \boxd{1}$\big), and the concatenation of an $A$-walk~$\fa$ and a reversed $B$-walk~$\bar\fb_2$ \big($\boxd{1}\ x \ldots x$\big). We set $\Phi(\fb_1\fc\fa\bar\fb_2)\de(\bar\fc,\fb_1 \fa \bar\fb_2)$.\lv{ (At this point, it might seem odd to have chosen~$\bar\fc$ instead of~$\fc$; this choice will only have bearings in Section~\ref{secstat}. See the caption of Figure~\ref{figbij} for more details.)}

\paragraph{Class~\ref{cas3}.}
Such a walk is composed of a $B$-walk~$\fb_1$ \big($2\ldots 2\ \boxd{1}$\big), another $B$-walk~$\fb_2$ \big($\boxd{1} \boxd{2}$\big), an $A$-walk~$\fa$ and a $-2$-step \big($\boxd{2}\ 2 \ldots 2$\big). We set $\Phi(\fb_1\fb_2\fa(-2))\de(\eps,\fb_1 \bar\fa \bar\fb_2)$.

\paragraph{Class~\ref{cas4}.}
A walk in the first subclass is made of a $2$-step and an $A$-walk~$\fa$ \big($2\ldots 2\ \boxd{2}$\big), then a reversed $B$-walk~$\bar\fb$ and a $-1$-step \big($\boxd{2} \boxd{1}$\big). We set $\Phi\big((+2)\fa\bar\fb(-1)\big)\de\big(\fb\bar\fa(-1),\eps\big)$.

A walk in the second subclass can be decomposed into a $B$-walk~$\fb_1$ \big($2\ldots 2\ \boxd{1}$\big), a $+2$-step, an $A$-walk~$\fa_1$ and a $-2$-step \big($\boxd{1} \boxd{1}$\big), and the concatenation of an $A$-walk~$\fa_2$ and a reversed $B$-walk~$\bar\fb_2$ \big($\boxd{1}\ x \ldots x$\big). We forget the $-2$-step and rearrange the remaining parts as follows: we set $\Phi\big(\fb_1(+2)\fa_1(-2)\fa_2\bar\fb_2\big)\de\big(\fb_1\fa_1\fb_2\bar\fa_2(-2),\eps\big)$.

\lv{Observe that the walks in the two previous subclasses together yield an even $+2$-step or an odd $-2$-step, as well as a $C$-walk whose final step is~$-1$ or~$-2$ depending on the subclass.

\bigskip}
\sv{\medskip}
We recover from the previous classification that $\cC=\cZ^2+\cC^2+\cZ\cC+\cZ\cC=(\cZ+\cC)^2$. \lv{To make the parallel with Section~\ref{secgen}, the classes we considered correspond to decomposing~$\cC$ as 
\begin{align*}
\cC&=\underbrace{\vphantom{\squ{}}\cZ^2}_{\ref{cas1}}+\underbrace{\cZ\!\cdot\!\cZ\squ(\cZ\cA)}_{\ref{cas4}.1}+\underbrace{\cZ\squ(\cZ\cA)\!\cdot\!\cZ\sq(\cZ\cA)}_{\ref{cas3}}
		+\underbrace{\cZ^2\cA^2\big(\cZ\sq(\cZ\cA)\big)^2}_{\ref{cas4}.2}+\underbrace{\vphantom{\squ{}}\cC^2}_{\ref{cas2}}. 
\end{align*}
The mapping~$\Phi$ amounts to rearranging~\ref{cas3} as $\cZ\cZ\cA\sq(\cZ\cA)\cZ\sq(\cZ\cA)=\cZ\cB\cA\cB=\cZ\cC$ and~\ref{cas4} as $\cZ^3\cA\sq(\cZ\cA)\big(\un+\cZ\cA\sq(\cZ\cA)\big)=\cZ^3\cA\sq(\cZ\cA)^2=\cZ\cB\cA\cB=\cZ\cC$. 
}Now, the previous analysis can be turned into a (recursive) bijection between $C$-walks and binary trees. We take a $C$-walk and recursively build the corresponding tree as follows. We start from the one-vertex tree and assign to its unique node the $C$-walk. Then, recursively, we do the following for every node with an assigned walk. We denote by $(\fp_1,\fp_2)$ the value of~$\Phi$ on the assigned walk and glue to the considered node two new vertices. If $\fp_1=\eps$, then we let the left vertex as a leaf; otherwise we assign to it the walk~$\fp_1$. We proceed similarly with the right vertex and~$\fp_2$. See Figure~\ref{exbij} for an example.

\sv{
\begin{figure}[ht!]
	\centering\includegraphics[width=.5\linewidth]{exbij_sv}
	\caption{The first stages of the bijection between a $14$-step $C$-walk and a $14$-leaf binary tree. The original $C$-walk is represented at the root of the tree. We represented by purple crosses the odd $+2$-steps and even $-2$-steps of the walks; observe that these are preserved at every stage. The other $\pm2$-steps satisfy a similar property.}
	\label{exbij}
\end{figure}
}
\lv{
\begin{figure}[ht!]
	\centering\includegraphics[width=.7\linewidth]{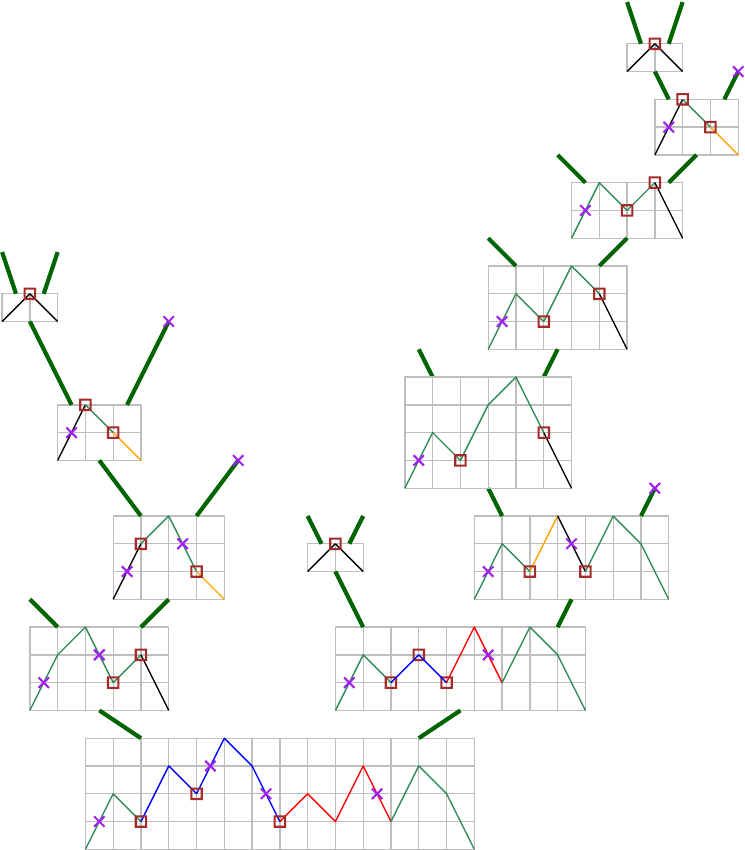}
	\caption{The bijection between a $14$-step $C$-walk and a $14$-leaf binary tree. The original $C$-walk is represented at the root of the tree and the walks assigned to the nodes of the tree are represented. We used the same color scheme as on the left of Figure~\ref{figbij} and marked by purple crosses the even $+2$-steps and odd $-2$-steps of the walks; observe that these are preserved at every stage except when a right leaf is created (these are also marked by purple crosses on the figure), in which case one such step disappears. The other $\pm2$-steps satisfy a similar property with left leaves and the $\pm1$-steps correspond to double leaves.}
	\label{exbij}
\end{figure}
}

Denoting by~$\Phi_1$ and~$\Phi_2$ the coordinate projections of~$\Phi$ and by $|\cdot|$ the number of steps in a walk, it is easy to check from the definition of~$\Phi$ that $|\fp|=|\Phi_1(\fp)|+|\Phi_2(\fp)|$, if we use the convention that $|\eps|\de 1$. As a consequence, we see that the number of leaves in the binary tree corresponding to a walk~$\fp$ is~$|\fp|$. We thus obtain, for each $n\ge 1$, a bijection between $C$-walks of length $n+1$ and binary trees with~$n+1$ leaves, which are counted by the $n$-th Catalan number.

\subsection{Matched statistics}\label{secstat}

Let us now see which quantities are matched by our bijection. \sv{We first need to introduce some terminology. A leaf of a tree is called \emph{double leaf} if its sibling is also a leaf, \emph{left leaf} if it is a left child and its sibling is not a leaf, and \emph{right leaf} if it is a right child and its sibling is not a leaf.}

\begin{prop}\label{propmatch}
We consider a $C$-walk and \sv{the corresponding}\lv{a} binary tree\lv{ corresponding to each other through the bijection}. Then
\begin{itemize}
	\item the number of $\pm 1$-steps of the walk is equal to the number of double leaves of the tree;
	\item the sum of the numbers of odd $+2$-steps and even $-2$-steps of the walk is equal to the number of left leaves of the tree;
	\item the sum of the numbers of odd $-2$-steps and even $+2$-steps of the walk is equal to the number of right leaves of the tree.
\end{itemize}
\end{prop}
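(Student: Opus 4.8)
The plan is to prove Proposition~\ref{propmatch} by induction on the number of steps of the $C$-walk, tracking the three statistics through the recursive construction of the tree. Since the tree is built by repeatedly applying $\Phi$ and attaching leaves whenever a coordinate equals~$\eps$, the natural strategy is to verify that, at each application of $\Phi$, the leaves that are \emph{created} at the current node (i.e.\ the coordinates equal to~$\eps$) carry exactly the right type, and that the $\pm1$- or $\pm2$-steps ``consumed'' at this step match those leaves, while the steps passed along to the children are preserved with their parity-types intact. Concretely, I would set up the induction so that the inductive hypothesis is the statement of the proposition applied to the (shorter) walks $\fp_1$ and $\fp_2$ assigned to the two children.

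The key bookkeeping is the parity of heights. First I would record the effect of the reversal operation $\fa\mapsto\bar\fa$ on step-types: reversing a walk sends a $+2$-step starting at height~$h$ to a $-2$-step ending at height~$h$, i.e.\ starting at height~$h-2$, which has the \emph{same} parity. Thus reversal exchanges odd $+2$-steps with odd $-2$-steps and even $-2$-steps with even $+2$-steps, meaning it \emph{swaps} the ``left-leaf type'' (odd $+2$ or even $-2$) with the ``right-leaf type'' (odd $-2$ or even $+2$), while fixing the $\pm1$-step count. I would then go through the four classes of the definition of~$\Phi$ one at a time. In Class~\ref{cas1}, the walk $(+1,-1)$ has exactly one $+1$-step and one $-1$-step, both $\pm1$-steps, and $\Phi=(\eps,\eps)$ creates two double leaves, matching perfectly. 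In Classes~\ref{cas2} and~\ref{cas3}, no coordinate among the steps is lost: every step of the input reappears (possibly reversed) in $\fp_1\fp_2$, so by the parity-preservation of reversal the three counts are simply inherited additively from the children. The delicate cases are the two subclasses of Class~\ref{cas4}, where one $\pm2$-step is genuinely discarded and one coordinate of~$\Phi$ is~$\eps$ (creating a single simple leaf); here I must check that the discarded step is of exactly the type matching the leaf that is created.

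For Class~\ref{cas4}, I would compute heights explicitly. In the first subclass, the input $(+2)\fa\bar\fb(-1)$ begins with an even $+2$-step (it starts at height~$0$) and $\Phi_1$ produces a $C$-walk while $\Phi_2=\eps$; since this $\eps$ sits in the right coordinate, it becomes a right leaf, and an even $+2$-step is precisely a right-leaf-type step, so the discarded step matches the created right leaf. In the second subclass, the decomposition $\fb_1(+2)\fa_1(-2)\fa_2\bar\fb_2$ has its marked $+2$-step occurring right after $\fb_1$, which ends at height~$1$; thus this $+2$-step is odd, and the paired $-2$-step we forget starts at height~$2$ hence is an even $-2$-step — again a left-leaf-type step in our classification, and I must confirm which coordinate is~$\eps$ and hence whether a left or right leaf is created, reconciling this with the reversals applied inside $\Phi$. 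I expect this parity-matching in Class~\ref{cas4}, together with carefully tracking how the internal reversals $\fa\mapsto\bar\fa$ and $\fc\mapsto\bar\fc$ flip the left/right types of the \emph{surviving} steps, to be the main obstacle: the reversals were introduced (per the figure caption) precisely so that these statistics line up, so the heart of the proof is verifying that each reversal is compensated by the left/right placement of the corresponding subtree. Once all four classes are checked at the level of a single $\Phi$-step, the induction closes immediately, since the total count of each leaf-type in the whole tree is the sum of the types created at the root plus the types in the two subtrees, which by the inductive hypothesis equal the corresponding step-counts in $\fp_1$ and $\fp_2$.
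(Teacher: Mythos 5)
Your overall strategy---recursion through $\Phi$, class-by-class verification, and parity bookkeeping for the reversal operations---is exactly the one the paper uses, but your case analysis contains a genuine error that breaks the induction. You assert that in class~\ref{cas3} ``no coordinate among the steps is lost'' and that only the two subclasses of class~\ref{cas4} discard a step and create a simple leaf. This is wrong: in class~\ref{cas3} one has $\Phi\big(\fb_1\fb_2\fa(-2)\big)=(\eps,\fb_1\bar\fa\bar\fb_2)$, so the final $-2$-step \emph{is} discarded and the $\eps$ in the \emph{first} coordinate creates a \emph{left} leaf. In fact class~\ref{cas3} is the only class that creates left leaves (both subclasses of class~\ref{cas4} put $\eps$ in the second coordinate, hence create right leaves), so under your accounting no left leaf would ever be created and the second bullet could not possibly close by induction. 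Your claim is also inconsistent with the length identity $|\fp|=|\Phi_1(\fp)|+|\Phi_2(\fp)|$ with $|\eps|=1$: whenever one coordinate is $\eps$, exactly one step must disappear. The correct accounting is that the discarded step of class~\ref{cas3} starts at height~$2$, hence is an even $-2$-step, i.e.\ precisely of left-leaf type; this is where the second bullet comes from.

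You also miscompute heights in the second subclass of class~\ref{cas4}: after $\fb_1$ (ending at height~$1$) and the $+2$-step the walk is at height~$3$, and $\fa_1$ returns it to height~$3$, so the forgotten $-2$-step starts at height~$3$, not~$2$; it is an \emph{odd} $-2$-step, of right-leaf type, which is what matches the right leaf created by $\Phi_2=\eps$, while the surviving odd $+2$-step (left type) reappears as the final $-2$-step of $\Phi_1$, starting at height~$2$ and therefore still of left type. As written, your version would leave the created right leaf matched to a left-type step, and the proof would fail. (A harmless slip besides: the reversal of a $+2$-step starting at height~$h$ is a $-2$-step starting at height $h+2$, not $h-2$; the parity conclusion stands.) Two further points of comparison with the paper's proof. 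First, it packages the ``reversal compensated by placement'' bookkeeping that you describe informally into a single invariance statement: writing $\ell(\fp)$ for the number of odd $+2$-steps plus even $-2$-steps, one has $\ell(\fp+2k)=\ell(\bar\fp+2k+1)=\ell(\fp)$ (where $\fp+p$ denotes $\fp$ shifted by~$p$), from which one checks $\ell(\fp)=\ell(\Phi_1(\fp))+\ell(\Phi_2(\fp))$ in class~\ref{cas2}, $\ell(\fp)=1+\ell(\Phi_2(\fp))$ in class~\ref{cas3}, and $\ell(\fp)=\ell(\Phi_1(\fp))$ in class~\ref{cas4}. Second, you need not verify the third bullet separately: the three walk statistics sum to the length, the three leaf types sum to the number of leaves, and length equals leaf count under the bijection, so the third bullet follows from the first two.
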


\lv{
\begin{pre}
We proceed recursively. Let $d(\fp)$ denote the number of $\pm 1$-steps of a walk~$\fp$. As all transformations involved in the definition of~$\Phi$ can only change a $k$-step into a $k$-step or a $-k$-step, we see that, for $\fp\in\cC\setminus\{(+1,-1)\}$, we have $d(\fp)=d(\Phi_1(\fp))+d(\Phi_2(\fp))$, with the convention that $d(\eps)\de 0$. Moreover, the walk $(+1,-1)$ has two $\pm 1$-steps and creates two double leaves, whereas any other walk does not create any double leaves. The first statement of the proposition follows.

Now remember that both $A$-walks and $C$-walks start and end at height~$0$, and that $B$-walks start at height~$0$ and end at height~$1$. Moreover, denoting by $\ell(\fp)$ the sum of the numbers of odd $+2$-steps and even $-2$-steps of a walk~$\fp$, we have $\ell(\fp+2k)=\ell(\bar\fp+2k+1)=\ell(\fp)$ for any integer~$k$ (where $\fp+p$ denotes the path with same step sequence as~$\fp$ and shifted by~$p$). In other words, the quantity~$\ell$ remains unchanged after a shift of even height or after a shift of odd height and a reversal. Bearing this in mind, observe that~$\Phi$ was chosen in such a way that, in class~\ref{cas1}, $\ell(\fp)=0$, in class~\ref{cas2}, $\ell(\fp)=\ell(\Phi_1(\fp))+\ell(\Phi_2(\fp))$, in class~\ref{cas3}, $\ell(\fp)=1+\ell(\Phi_2(\fp))$, and in class~\ref{cas4}, $\ell(\fp)=\ell(\Phi_1(\fp))$. The second statement follows from the fact that walks of class~\ref{cas3} create exactly one left leaf whereas walks of other classes do not create any.

The third statement follows by a similar reasoning or by noticing that the sum of the three considered statistics pertaining to a walk is equal to its length and that the sum of the three considered statistics pertaining to a tree is equal to its number of leaves, as we already observed that the length of a walk was matched by the bijection to the number of leaves of the corresponding tree.
\end{pre}
}

\sv{The proof of this proposition is quite straightforward and }Proposition~\ref{propstats} is a direct consequence of \sv{it}\lv{Proposition~\ref{propmatch}}, as the desired number is then equal to the number of binary trees with~$d$ double leaves, $\ell$ left leaves and~$r$ right leaves. \sv{We refer the reader to the extended version of this paper for the details.}\lv{This number can easily be obtained from~\cite[Theorem~4]{mckenzie2000distributions} (which gives the number of non-embedded rooted binary trees with~$n$ labeled leaves, and~$k$ pairs of double leaves, called \emph{cherries}), or from counting unary-binary trees; we provide here a short line of proof for the sake of self-containedness (note that one can also obtain the expression through the Lagrange inversion formula, as in the proof of Proposition~\ref{unaire}). 

\begin{pre}[Proof of Proposition~\ref{propstats}]
A binary tree with~$2d$ double leaves, $\ell$ left leaves and~$r$ right leaves can be obtained as follows. We start from a binary tree with a root of degree~$1$ and~$d$ leaves and we add $\ell+r$ extra vertices on its $2d-1$ edges. We graft two new edges to each of the~$d$ original leaves. Among the added vertices, we select~$\ell$ vertices to the left of which we graft an edge and we graft an edge to the right of the unselected vertices. As this operation is clearly bijective, we obtain that the desired number is equal to the $(d-1)$-th Catalan number, times the number of ways to put $\ell+r$ extra vertices on $2d-1$ edges, times the number of ways to select~$\ell$ vertices among $\ell+r$.
\end{pre}
}

\sv{
\begin{rem}
From Proposition~\ref{propmatch}, our bijection specializes into a bijection between Dyck walks and binary trees with only double leaves. The latter are in direct bijection with binary tree with half as many leaves. In fact, it can be checked that, in this case, our bijection gives the classical encoding of binary trees by Dyck walks, up to mirroring the left subtree at each step (because of the reversal of~$\fc$ in class~\ref{cas2}).
\end{rem}
}
\lv{
\subsection{Walks without large steps}

Let us restrict our attention to $C$-walks without $\pm 2$-steps. Such walks are in direct bijection with $A$-walks without $\pm 2$-steps by removing the first and last steps (recall that a $C$-walk may not visit~$0$ more than twice, whereas an $A$-walk may) and the latter are exactly Dyck walks. If~$\fp$ is such a $C$-walk, it can only be in the classes~\ref{cas1} or~\ref{cas2}. Moreover, the only $B$-walk without $\pm 2$-steps is the trivial walk $(+1)$, so that, if~$\fp$ is of class~\ref{cas2}, it is of the form $(+1,+1)\fa_1 (-1) \fa_2 (-1)$ where~$\fa_1$ and~$\fa_2$ are $A$-walks without $\pm 2$-steps. As a consequence, we see that $\Phi\big((+1,+1)\fa_1 (-1) \fa_2 (-1)\big)=\big((+1)\bar\fa_1(-1),(+1) \fa_2 (-1)\big)$ and our bijection is in this case a slight modification of the classical bijection between Dyck walks and binary trees.

Recall that the classical bijection may be constructed in a similar way as ours with a function~$\Psi$ defined on nonempty Dyck walks by $\Psi\big((+1)\fa_1 (-1) \fa_2\big)\de(\fa_1,\fa_2)$. Then, from a Dyck walk, we recursively construct the tree by starting with a vertex tree and assigning to its vertex the walk. Then, we split every vertex with an assigned nonempty walk~$\fa$ into two new vertices to which we assign the coordinates of~$\Psi(\fa)$. Through this bijection, the length of the Dyck walk is twice the number of leaves minus~$2$.

Plainly, our bijection gives the same result with two double leaves grafted onto each leaf (as the empty Dyck walk yields a leaf in the classical bijection and the corresponding $C$-walk $(+1,-1)$ yields two double leaves in our construction) and where each left subtree is mirrored at each step (because of the reversal of~$\fa_1$).

Summing up, we see that the restriction of our bijection to $C$-walks without $\pm 2$-steps gives a bijection between such walks and binary trees, which is a slight alteration of the classical encoding of binary trees by Dyck walks. Note that, here, the length of the $C$-walk is twice the number of leaves of the corresponding binary tree (equivalently, it is the number of leaves of the tree obtained by adding two double leaves to each original leaf).
}

\section{Link with increasing unary-binary trees}\label{secubt}

\lv{
\subsection{Proof of Theorem~\ref{thmubt}}
}

Let us now investigate the connection between basketball walks and increasing unary-binary trees with associated permutation avoiding~$213$. We start by giving\lv{ a more convenient characterization of the permutations allowed on a given (unlabeled) unary-binary tree. We say that a permutation $\si=(\si_1,\ldots,\si_n)$ is \emph{valid} for a unary-binary tree if the labeled tree obtained by labeling the vertices by~$\si_1$, \ldots, $\si_n$ in (left-to-right) breadth-first search order is an increasing unary-binary tree with associated permutation avoiding~$213$. In other words, the labeled tree is increasing and~$\si$ avoids~$213$.

\begin{lem}\label{lembits}
We consider an $n$-vertex unary-binary tree and denote by $\sN\subseteq\{1,2,\ldots,n\}$ the set of indices of its nodes when reading its vertices in breadth-first search order. The permutation~$\si$ is valid for the tree if and only if it avoids~$213$ and, for all $i\in \sN$, $\si_i=\min_{i\le j\le n} \si_j$; in other words, the elements of~$\sN$ are indices of right-to-left minimums.
\end{lem}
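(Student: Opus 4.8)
The plan is to prove both implications by relating the breadth-first labeling of the tree to structural properties of the permutation. Throughout I identify position~$i$ in breadth-first order with the vertex read $i$-th, and I freely use that a vertex at position~$i$ is a node exactly when $i\in\sN$. The key observation I would establish first is a translation between the ``increasing'' condition and the right-to-left minimum condition. In a breadth-first reading, the parent of any vertex is read strictly before all of its children. Hence the labeled tree is increasing (every node's label is smaller than each of its children's labels) if and only if, for every node at position~$i$, the label $\si_i$ is smaller than the labels of its children. I would argue that, \emph{given that $\si$ avoids~$213$}, this local condition at the nodes is equivalent to the global condition $\si_i=\min_{i\le j\le n}\si_j$ for all $i\in\sN$.

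For the forward direction, suppose $\si$ is valid, so the tree is increasing and $\si$ avoids~$213$; I must show each node index is a right-to-left minimum. Fix a node at position~$i$ and suppose for contradiction that some $j>i$ has $\si_j<\si_i$. Since the tree is increasing, $\si_i$ is smaller than both of its children's labels, so $j$ cannot be a child of~$i$; more generally $\si_i$ is smaller than every label in the subtree rooted at~$i$, so position~$j$ lies outside that subtree. The plan here is to exhibit a $213$ pattern: using that~$i$ is a node it has a child at some position~$c>i$ with $\si_c>\si_i$, and $c<j$ would be needed to place the pattern $(\si_i,\si_c,\si_j)$ with relative order $213$ (middle largest, last smallest). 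The delicate point is guaranteeing a child position~$c$ with $i<c<j$; if all children of~$i$ are read after~$j$ one must instead track a descendant of~$i$ appearing between~$i$ and~$j$, or choose the offending $j$ to be the \emph{first} position after~$i$ with a smaller label. I would therefore pick $j$ minimal with $j>i$ and $\si_j<\si_i$, which forces all positions strictly between~$i$ and~$j$ to have labels exceeding~$\si_i$; combined with the existence of a child of~$i$ read in breadth-first order reasonably close to~$i$, this yields the forbidden pattern and the contradiction.

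For the converse, assume $\si$ avoids~$213$ and every node index is a right-to-left minimum; I must show the labeled tree is increasing, i.e.\ recover that each node's label is below its children's. This is immediate from the right-to-left minimum hypothesis: if~$i$ is a node and~$c$ is any child of~$i$, then $c>i$ in breadth-first order, so $\si_c\ge\min_{i\le j\le n}\si_j=\si_i$, and equality is excluded since~$\si$ is a permutation; hence $\si_i<\si_c$. As this holds along every parent--child edge (every internal edge descends from a node), the labeling is increasing, and since~$\si$ avoids~$213$ by assumption, $\si$ is valid.

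The main obstacle is the forward direction, specifically producing an explicit $213$ occurrence from a node that fails to be a right-to-left minimum. The crux is controlling \emph{which} position between~$i$ and the violating index carries a label larger than~$\si_i$ to serve as the ``middle'' entry of the pattern, and ensuring such a position exists in breadth-first order; handling this cleanly (via the minimal-violation choice of~$j$ together with the fact that a node necessarily has a child, whose breadth-first position can be located relative to~$j$) is where the real work lies.
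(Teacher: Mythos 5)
Your converse direction (right-to-left minimums plus $213$-avoidance implies validity) is correct and is exactly the paper's argument. The forward direction, however, rests on a genuine error: the triple you propose, $(\si_i,\si_c,\si_j)$ at positions $i<c<j$ with values $\si_j<\si_i<\si_c$, is an occurrence of the pattern $231$, not $213$. In the pattern $213$ the \emph{middle} entry is the smallest and the \emph{last} entry is the largest, whereas your parenthetical gloss ``middle largest, last smallest'' describes $231$; since the hypothesis is only that $\si$ avoids $213$, such a triple yields no contradiction. Your fallback of choosing $j$ minimal among violators makes matters worse, not better: minimality forces every entry strictly between $i$ and $j$ to exceed $\si_i$, which produces only $231$ configurations. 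What is actually needed is a large entry \emph{after} the violating index, so that the small violator sits in the middle of the pattern.

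That is precisely how the paper proceeds, and it is the part your plan is missing. Let $v_{j^*}$ be the \emph{last} child of the node $v_i$ in breadth-first order. Then $\si_i<\si_{j^*}$ because the tree is increasing; for $i<k<j^*$, a value $\si_k<\si_i$ would make $(\si_i,\si_k,\si_{j^*})$ a genuine $213$, so all these entries exceed $\si_i$; and for $k>j^*$ pattern avoidance is not used at all, but rather the tree structure: any vertex $v_k$ with $k>j^*$ has an ancestor $v_{k'}$ with $i<k'\le j^*$ (this is where taking the \emph{last} child matters), whence $\si_k>\si_{k'}>\si_i$ by the increasing property combined with the previous two cases. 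Your sketch gestures at ``tracking a descendant of $i$'' for the late indices, but that is also not the right statement: vertices read after $v_{j^*}$ need not be descendants of $v_i$ at all; the correct claim is that they have an ancestor among $v_{i+1},\ldots,v_{j^*}$. Both the orientation of the pattern and the treatment of indices beyond the last child therefore need to be redone along these lines.
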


\begin{rem}
This observation was made in the very particular case of complete unary-binary trees, for which the vertices have a very simple structure as they are arranged by nodes first and leaves last; see \cite[Theorem~4]{LPRS16heaps}.
\end{rem}

\begin{pre}
Let us denote by~$v_1$, \ldots, $v_n$ the vertices of the tree read in breadth-first search order. By definition, $\si$ is valid for the tree if and only if it avoids~$213$ and, for all $i\in \sN$ and all $j\in\{1,\ldots,n\}$ such that~$v_j$ is a child of~$v_i$, we have $\si_i<\si_j$. By definition of the breadth-first search order, if~$i$ and~$j$ are as above, then $i<j$. As a result, a permutation for which the elements of~$\sN$ are indices of right-to-left minimums is immediately valid.

Conversely, let us take a valid permutation~$\si$ and an index $i\in \sN$; it is sufficient to show that~$i$ is the index of a right-to-left minimum, that is, $\si_i<\si_k$ for all $k\in\{i+1,\ldots,n\}$. Let $j\in\{1,\ldots,n\}$ be the largest integer such that~$v_j$ is a child of~$v_i$. First, $\si_i<\si_j$ as the labeled tree is increasing. Second, we must have $\si_i<\si_k$ for all $k\in\{i+1,\ldots,j-1\}$ as $\si_i<\si_j$ and $(\si_i,\si_k,\si_j)$ must avoid~$213$. Last, if $k\in\{j+1,\ldots,n\}$, then~$v_{k}$ must have an ancestor~$v_{k'}$ for some $k'\in\{i+1,\ldots,j\}$ and we see that $\si_k>\si_{k'}>\si_i$ by the above observation.
\end{pre}

\begin{lem}\label{lemavoid}
For $n\geq 1$, let $\cP_n$ be the set of $213$-avoiding permutations of size~$n$ and, for each nonempty subset $\mathscr E\subseteq\{1,\ldots,n-1\}$, let $\cP_n^{(\mathscr E)}\de\{\si\in\cP_n\,:\, \text{each $a\in \mathscr E$ is the index of a right-to-left minimum of }\si\}$. Then, for such a subset $\mathscr E=\{a_1<\ldots<a_k\}$, we have
$$\cP_n^{(\mathscr E)}\simeq \cP_{a_1}\times\cP_{a_2-a_1}\times\cdots\times\cP_{a_k-a_{k-1}}\times\cP_{n-a_k}.$$ 
\end{lem}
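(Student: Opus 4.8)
The plan is to realize $\simeq$ by an explicit bijection and to reduce the multi-index statement to the single-index case $\cP_n^{(\{a\})}\simeq\cP_a\times\cP_{n-a}$, which is then iterated. The forward map is the obvious one: given $\si\in\cP_n^{(\mathscr E)}$ with $\mathscr E=\{a_1<\cdots<a_k\}$ and setting $a_0\de 0$, $a_{k+1}\de n$, I record for each $0\le i\le k$ the pattern $\pi^{(i)}$ of the factor $(\si_{a_i+1},\ldots,\si_{a_{i+1}})$, i.e.\ the permutation of size $a_{i+1}-a_i$ order-isomorphic to that block. Each $\pi^{(i)}$ avoids $213$ because any sub-pattern of a $213$-avoiding permutation does, so the map lands in the claimed product. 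All the content is in proving it is a bijection, and for this I first isolate a single cut and then iterate.

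For the single cut, the heart is the reconstruction. Writing $s\de\si_a$, the condition that $a$ is a right-to-left minimum says $\si_a<\si_j$ for all $j>a$, so the values $\{1,\ldots,s\}$ all lie in positions $\{1,\ldots,a\}$. I claim $213$-avoidance then forces the value sets of the two blocks: if some value $w>s$ occupies a position $i<a$ and some value $w'>w$ occupies a position $l>a$, then $(w,s,w')$ at positions $(i,a,l)$ is a $213$; hence every value placed after position $a$ is smaller than every value $>s$ placed before position $a$. Consequently the right block carries exactly $\{s+1,\ldots,s+(n-a)\}$ and the left block the complementary set $\{1,\ldots,s\}\cup\{s+n-a+1,\ldots,n\}$. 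Since $s$ is the $s$-th smallest value of the left block, $s$ equals the last entry of the left pattern $\alpha$; thus $s=\alpha_a$ can be read off from $\alpha$ alone, and the whole value partition, hence $\si$, is determined by the pair $(\alpha,\beta)$. This proves injectivity and pins down the unique preimage candidate.

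It then remains to check that, conversely, the permutation $\si$ reassembled from an arbitrary pair $(\alpha,\beta)\in\cP_a\times\cP_{n-a}$ via the recipe $s\de\alpha_a$, right block $=\{s+1,\ldots,s+n-a\}$, left block $=$ complement, does avoid $213$ and does have $a$ as a right-to-left minimum. The latter is immediate since every value after position $a$ exceeds $s=\si_a$. For $213$-avoidance I would split a putative occurrence at positions $i<j<l$ according to which blocks contain $i,j,l$: the cases with all three in one block are excluded by $\alpha$, resp.\ $\beta$, avoiding $213$; the case $i$ left, $j,l$ right forces $\si_i>s$, hence $\si_i$ larger than every right-block value, contradicting $\si_i<\si_l$. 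The only delicate case — and the point I expect to be the main obstacle — is $i,j$ left, $l$ right, which amounts to an inversion among the values $\{1,\ldots,s\}$ inside the left block. It is killed by the observation that in any $213$-avoiding permutation the entries less than or equal to its last entry appear in increasing order (otherwise such an inversion, together with the last entry, would itself form a $213$). Applied to $\alpha$, whose last entry is exactly $s$, this guarantees the values $\{1,\ldots,s\}$ occur increasingly in the left block.

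Finally I iterate. In the single-cut bijection $\cP_n^{(\{a_1\})}\simeq\cP_{a_1}\times\cP_{n-a_1}$ the left factor is unconstrained, while a position $a_i>a_1$ is a right-to-left minimum of $\si$ if and only if $a_i-a_1$ is a right-to-left minimum of the right pattern $\beta$ (being a right-to-left minimum only involves comparisons with later positions, all lying in the right block, on which $\si$ and $\beta$ are order-isomorphic). Hence the bijection restricts to $\cP_n^{(\{a_1,\ldots,a_k\})}\simeq\cP_{a_1}\times\cP_{n-a_1}^{(\{a_2-a_1,\ldots,a_k-a_1\})}$, and an induction on $k$, with base case $k=1$ the single cut, yields the stated product decomposition.
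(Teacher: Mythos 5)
Your proof is correct and complete; it shares the paper's forward map (recording the renormalized pattern of each factor $(\si_{a_i+1},\ldots,\si_{a_{i+1}})$) but establishes bijectivity by a genuinely different route. The paper builds the inverse in one global step: it invokes the canonical form of a $213$-avoiding permutation with $p$ right-to-left minimums, $\si=(\si^{\langle 0\rangle},1,\si^{\langle 1\rangle},2,\ldots,\si^{\langle p-1\rangle},p)$, where the blocks $\si^{\langle i\rangle}$ are $213$-avoiding subpermutations of $\{p+1,\ldots,n\}$ with disjoint ranges arranged in decreasing order, and then defines the preimage of a tuple by concatenating the pieces and renormalizing values so that the right-to-left minimums become consecutive small values and the blocks keep decreasing ranges. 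You instead reduce to a single cut, show that cutting at a right-to-left minimum forces the value sets of the two sides (your $213$-argument giving the right block exactly $\{s+1,\ldots,s+n-a\}$ with $s$ readable as the last entry of the left pattern), check $213$-avoidance of the reassembled permutation by a case analysis whose one delicate case is handled by the observation that entries below the last entry of a $213$-avoider increase, and then induct on $k$ using the fact that the remaining constraints transfer to the right pattern. What the paper's route buys is an explicit, non-recursive description of the inverse, which is exactly what Figure~\ref{decomp_perm} depicts and what the proof of Lemma~\ref{lemMotz} reuses to reconstruct the permutation from the decorations of the Motzkin walk; the cost is that the ``crucial property'' (the canonical form) is asserted without proof. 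Your route is more elementary and fully self-contained—every structural fact is derived from scratch—but yields the inverse only recursively. Since the two forward maps coincide, your proof would plug into the rest of the paper unchanged.
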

\begin{pre}
The mapping from $\si\in\cP_n^{(\mathscr E)}$ to $(\si^{(0)},\ldots,\si^{(k)})\in\cP_{a_1}\times\cP_{a_2-a_1}\times\cdots\times\cP_{a_k-a_{k-1}}\times\cP_{n-a_k}$ is very simple: for $i\in\{0,\ldots,k\}$, we let $\sigma^{(i)}$ be the renormalized permutation induced by $\sigma$ on the index-set
$\{a_{i}+1,\ldots,a_{i+1}\}$ (with the convention $a_0=0$ and $a_{k+1}=n$).

\begin{figure}[ht]
	\centering\includegraphics[width=.95\linewidth]{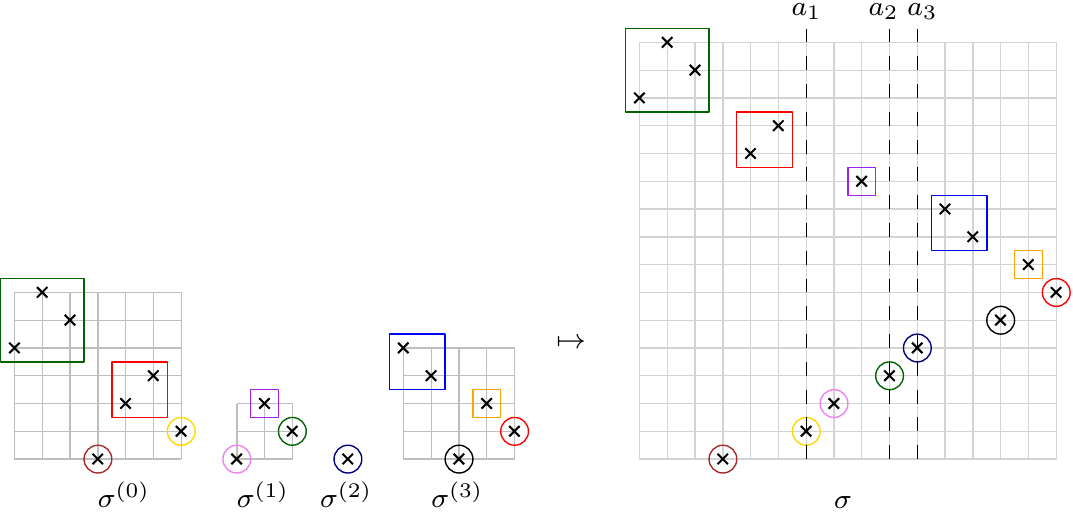}
	\caption{From a $(k+1)$-sequence $(\sigma^{(0)},\ldots,\sigma^{(k)})$ of nonempty $213$-avoiding permutations ($k=3$ in the example) to a $213$-avoiding permutation~$\sigma$ of size $|\sigma^{(0)}|+\cdots+|\sigma^{(k)}|$. If we let $a_i\de|\sigma^{(0)}|+\cdots+|\sigma^{(i-1)}|$ for each $i\in\{1,\ldots,k\}$, then~$a_i$ is the index of a right-to-left minimum of~$\sigma$. On the figure, the right-to-left minimums are circled and the (nonempty) blocks are framed.}
	\label{decomp_perm}
\end{figure}    

To inverse the mapping we rely on the crucial property that a permutation $\sigma\in\cP_n$ with~$p$ right-to-left minimums is of the form 
$\sigma=(\si^{\langle 0\rangle},1,\si^{\langle 1\rangle},2,\si^{\langle 2\rangle},3,\ldots,\si^{\langle p-1\rangle},p)$ where~$1$, \ldots, $p$ are the  right-to-left minimums and~$\si^{\langle 0\rangle}$, \ldots, $\si^{\langle p-1\rangle}$ are (possibly empty) subpermutations of $\{p+1,\ldots,n\}$ avoiding~$213$ with disjoint ranges arranged in decreasing order (that is, $\min \si^{\langle i\rangle} > \max\si^{\langle j\rangle}$ whenever $i<j$ and $\si^{\langle i\rangle}$, $\si^{\langle j\rangle}\neq\emptyset$); these subpermutations are called the \emph{blocks} of $\si$. 

Starting from $(\si^{(0)},\ldots,\si^{(k)})\in\cP_{a_1}\times\cP_{a_2-a_1}\times\cdots\times\cP_{a_k-a_{k-1}}\times\cP_{n-a_k}$, we get to $\sigma\in\cP_n^{(\mathscr E)}$ as follows: we concatenate $\si^{(0)}$, \ldots, $\si^{(k)}$ and renormalize the values in the unique way so that the right-to-left minimums of~$\sigma$ are those of~$\si^{(0)}$, \ldots, $\si^{(k)}$ in increasing order (with consecutive values) and the blocks of~$\sigma$ are those of~$\si^{(0)}$, \ldots, $\si^{(k)}$, with ranges in decreasing order; see Figure~\ref{decomp_perm} for an example. Note that, for each $i\in\{0,\ldots,k-1\}$, since the last index in~$\si^{(i)}$ is necessarily the index of a right-to-left minimum for $\si^{(i)}$, the corresponding index~$a_{i+1}$ is the index of a right-to-left minimum for~$\sigma$.
\end{pre}

Using the previous two lemmas, we may now give} a more convenient encoding of \sv{these objects}\lv{unary-binary trees with associated permutation avoiding~$213$}. Recall that a \emph{Motzkin walk} is a walk with step-set $\{-1, 0, +1\}$. We call \emph{descending sequence} of a Motzkin walk a maximal run of~$-1$'s, that is, a sequence of consecutive steps of the form $(-1,\ldots, -1)$ that is either initial or preceded by a $+1$- or $0$-step and is either final or succeeded by a $+1$- or $0$-step. We finally say that a Motzkin walk is \emph{decorated} if each of its descending sequences is marked with a permutation avoiding~$213$ whose size is the length of the descending sequence (its number of~$-1$'s) plus one.

\begin{lem}\label{lemMotz}
For $n\ge 1$, $n$-vertex increasing unary-binary trees with associated permutation avoiding~$213$ are bijectively encoded by $n$-step decorated Motzkin walks from~$0$ to~$1$ that are positive except at the origin.
\end{lem}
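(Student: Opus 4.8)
The plan is to exhibit a size-preserving bijection by decomposing both sides along their tree/walk structure and matching the combinatorial data at each node with the step data of the Motzkin walk. The key observation, already provided by Lemma~\ref{lembits} and Lemma~\ref{lemavoid}, is that the ``increasing + $213$-avoiding'' condition on a unary-binary tree factors into two independent pieces of information: first the underlying unlabeled tree shape, and second a valid permutation, and that a valid permutation is determined precisely by its being $213$-avoiding together with the constraint that the node-indices (in breadth-first order) are right-to-left minimums. So I would first read the vertices of the unary-binary tree in breadth-first search order and record the \emph{bit sequence} registering, for each vertex, whether it is a node or a leaf; then separately record the permutation data.

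First I would build the Motzkin walk from the tree shape alone. Reading the tree in breadth-first order, I would map each vertex to a step according to its arity: a binary node (two children) to a $+1$-step, a unary node (one child) to a $0$-step, and a leaf to a $-1$-step. I expect this to produce an $n$-step walk with steps in $\{-1,0,+1\}$. The standard breadth-first encoding of plane trees guarantees the walk ends at height~$1$ (the final height counts $2\cdot\#\{\text{binary}\} + \#\{\text{unary}\} - \#\{\text{leaves}\} + 1$, which for a tree with $n$ vertices equals $1$ since edges $= n-1$ and each node contributes its children-count), and that it is positive except at the origin, because at each prefix the height records the number of not-yet-visited vertices minus one shifted appropriately, which stays positive until the very end. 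This is exactly the class of Motzkin walks in the statement; so the tree shape $\leftrightarrow$ shape-Motzkin-walk correspondence is the classical breadth-first bijection and is straightforward to verify.

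Next I would attach the permutation data as the decoration. By Lemma~\ref{lembits}, a valid labeling is a $213$-avoiding permutation whose node-indices are right-to-left minimums; equivalently, taking $\mathscr E$ to be the set of node-indices, the permutation lies in $\cP_n^{(\mathscr E)}$, and Lemma~\ref{lemavoid} factors this set as a product of smaller $213$-avoiding permutation classes indexed by the gaps between consecutive node-indices. The crucial point is to identify these gaps with the \emph{descending sequences} of the Motzkin walk: a maximal run of $-1$'s corresponds to a maximal run of leaves in breadth-first order, i.e.\ a maximal run of non-node indices, so each descending sequence of length $\ell$ (followed by a node or terminal) sits between two consecutive node-indices (or a boundary) and the corresponding factor in Lemma~\ref{lemavoid} has size $\ell+1$, matching exactly the decoration rule (a $213$-avoiding permutation of size ``length plus one''). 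Thus the product decomposition of $\cP_n^{(\mathscr E)}$ is precisely a choice of one $213$-avoiding permutation per descending sequence, of the prescribed size, which is by definition a decoration of the Motzkin walk.

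The main obstacle will be getting the index bookkeeping exactly right: I must check that the set $\mathscr E$ of node-indices aligns with the block structure of Lemma~\ref{lemavoid} so that the factor sizes are $a_{i+1}-a_i$ and these equal the descending-sequence lengths plus one, and that off-by-one issues at the initial and final runs (the conventions $a_0=0$, $a_{k+1}=n$, and an initial or final descending sequence) are handled consistently. In particular I must confirm that every node-index is genuinely a right-to-left minimum (so the full permutation is recovered) and that no spurious constraint is imposed on leaf-indices, which is exactly what Lemma~\ref{lembits} grants. Once this alignment is verified, the bijection is the composition of the breadth-first shape encoding with the permutation factorization, and its inverse is obtained by reversing both, so size-preservation and bijectivity are immediate.
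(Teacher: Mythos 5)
Your overall strategy coincides with the paper's: encode the tree shape by reading vertices in breadth-first order as a Motzkin-type walk, and use Lemma~\ref{lembits} together with Lemma~\ref{lemavoid} to turn the valid permutation into one $213$-avoiding decoration per descending sequence. However, there is a genuine error in your walk construction. Mapping \emph{every} vertex to the step $c(v)-1$ (binary node $\to+1$, unary node $\to 0$, leaf $\to-1$) produces an $n$-step walk whose total displacement is $\sum_v\bigl(c(v)-1\bigr)=(n-1)-n=-1$: this is the classical {\L}ukasiewicz-type encoding, a walk from~$0$ to~$-1$ that is nonnegative except at its final step. It is \emph{not} a walk from~$0$ to~$1$ that is positive except at the origin, so it does not lie in the class required by the statement; your height computation ``$2\cdot\#\{\text{binary}\}+\#\{\text{unary}\}-\#\{\text{leaves}\}+1=1$'' is incorrect (that quantity equals the number of internal nodes). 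The paper avoids this by a small but essential modification: the walk is taken to be $(+1,\,c(v_1)-1,\ldots,c(v_{n-1})-1)$, that is, an initial $+1$-step is prepended and the step of the last vertex $v_n$ (necessarily a leaf in breadth-first order) is dropped; this walk does land in the stated class.

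This is not merely cosmetic, because the uniform decoration rule (a permutation of size equal to the length of the descending sequence \emph{plus one}) matches the factorization of Lemma~\ref{lemavoid} only under the paper's convention. With your walk, the trailing run of leaves $v_{a_k+1},\ldots,v_n$ (where $a_k$ is the last node index) yields a final descending sequence of length $n-a_k$, whereas the corresponding factor in Lemma~\ref{lemavoid} is $\cP_{n-a_k}$, of size equal to that length and not the length plus one; the internal runs, by contrast, do require size equal to length plus one. So the decoration sizes cannot be assigned consistently, and the map you describe is not well defined into the stated set of decorated walks. Under the paper's convention the last leaf contributes no $-1$-step (it is accounted for by the initial $+1$-step), the final descending sequence has length $n-a_k-1$, and the rule ``length plus one'' holds uniformly --- exactly the boundary bookkeeping you flagged as the main obstacle but did not carry out correctly.
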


\sv{We refer the reader to the extended version for the proof; }Figure~\ref{exmotz} gives an example\lv{ of this encoding}.

\sv{
\begin{figure}[ht]
	\centering\includegraphics[width=.95\linewidth]{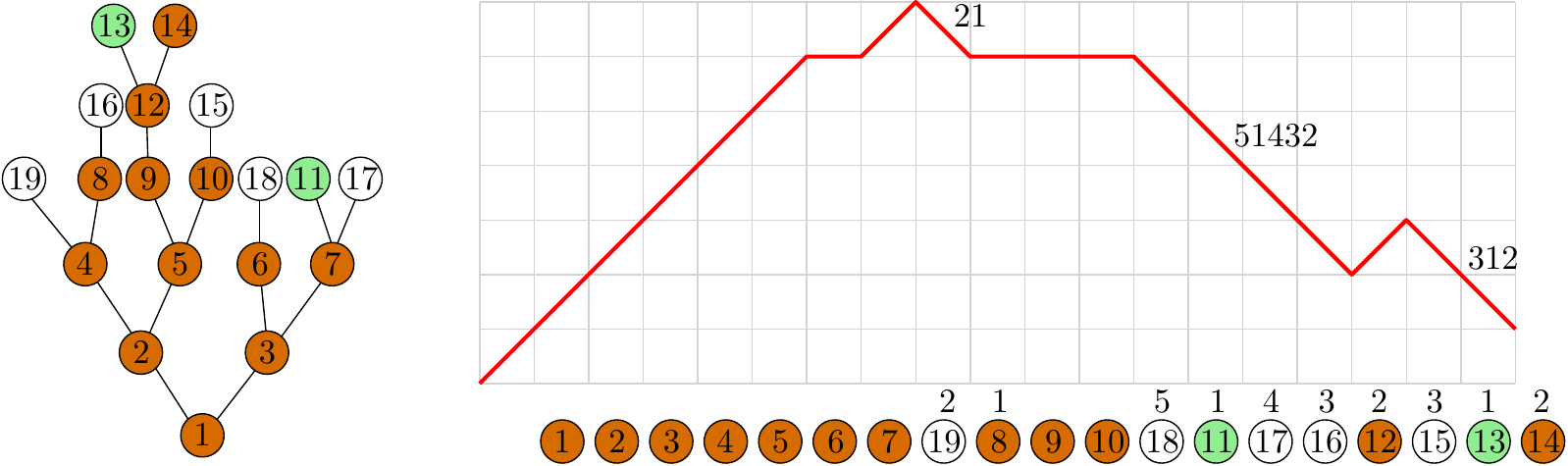}
	\caption{\textbf{Left:} an increasing unary-binary tree with associated permutation avoiding~$213$. \textbf{Right:} its encoding as a decorated Motzkin walk. The nodes and the last leaf are colored in brown and the other right-to-left minimums are colored in green.}
	\label{exmotz}
\end{figure}
}\lv{
\begin{figure}[ht]
	\centering\includegraphics[width=.95\linewidth]{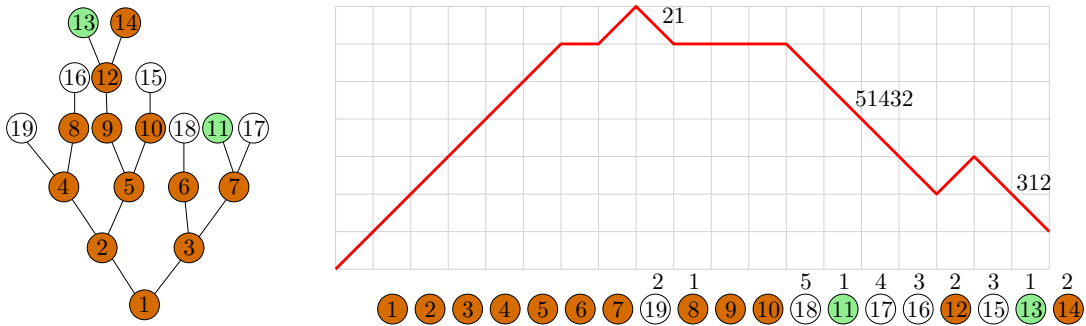}
	\caption{\textbf{Left:} an increasing unary-binary tree with associated permutation avoiding~$213$. \textbf{Right:} its encoding as a decorated Motzkin walk. The nodes and the last leaf are colored brown and the other right-to-left minimums are colored green. Up to a slight modification at its extremities, the Motzkin walk is the classical {\L}ukasiewicz encoding of the tree. The decorations are the renormalized permutations corresponding to the labels of the maximal sequences of leaves when reading the vertices of the tree in breadth-first search order. The full permutation is recovered from the decorations by the mapping of Figure~\ref{decomp_perm}, after assigning the permutation~$1$ to each $0$- or (non-initial) $+1$-step that does not directly follows a descending sequence.}
	\label{exmotz}
\end{figure}

\begin{pre}
Let~$\mathbf{T}$ be a unary-binary tree with~$n$ vertices and associated permutation~$\si$ avoiding the pattern~$213$. 
As above, we denote by~$v_1$, \ldots, $v_n$ the vertices of~$\mathbf{T}$ in the (left to right) breadth-first search order. 
We classically encode~$\mathbf{T}$ as a Motzkin walk as follows:
if we denote by $c(v)\in\{0,1,2\}$ the number of children of the vertex~$v$, 
the walk~$\fp$ associated with~$\mathbf{T}$ starts at zero and follows the step sequence $(+1,c(v_1)-1,c(v_2)-1,\ldots,c(v_{n-1})-1)$. 
The usual description\footnote{Note that there is also a classical encoding using the depth-first search order instead of the breadth-first search order. Both constructions give bijections between the same sets and the proofs are similar.} 
follows a slightly different step sequence; namely the first~$+1$ is omitted and a final step $c(v_n)-1=-1$ is added. 
This only changes the fact that we obtain a positive walk from~$0$ to~$1$ instead of a nonnegative walk from~$0$ to~$-1$.

Let us now take into account the labels of the tree. 
Let $\sN=\{a_1<\ldots<a_k\}$ be the set of indices 
of the nodes of~$\mathbf{T}$ (note that $a_1=1$).
According to Lemma~\ref{lembits}, 
each $a\in\sN$ is the index of a right-to-left minimum of~$\si$. 
Hence, $\si\in\cP_n^{(\sN)}$ and, by Lemma~\ref{lemavoid}, 
$\si$ can be identified with the $(k+1)$-tuple $(\si^{(0)},\ldots,\si^{(k)})$, where~$\si^{(i)}$ is the renormalized
permutation induced by~$\si$ on the index-set $\{a_{i}+1,\ldots,a_{i+1}\}$
(with the convention $a_0=0$ and $a_{k+1}=n$). 

In the Motzkin walk~$\fp$, the initial $+1$-step corresponds to the last leaf of~$\mathbf{T}$, the other $+1$- and $0$-steps correspond to the nodes of~$\mathbf{T}$ and the $-1$-steps correspond to the leaves of~$\mathbf{T}$ except for the last one. This entails that, for each $i\in\{0, \ldots, k\}$ such that $a_{i+1}-a_i\ge 2$, the steps of~$\fp$ between indices $a_i+1$ and $a_{i+1}-1$ form a descending sequence of length $a_{i+1}-a_i-1$. To the descending sequence corresponding to such an~$i$, we assign the permutation~$\sigma^{(i)}$ (of size $a_{i+1}-a_i$). We thus obtain a decorated Motzkin walk that encodes the tree~$\mathbf{T}$ with no loss of information, as $\si^{(i)}=(1)$ whenever $a_{i+1}-a_i=1$. See Figure~\ref{exmotz} for an example.
\end{pre}
}

We now introduce two new generating functions of walks, counted with weight~$z$ per step:
\begin{itemize}
	\item $T$ counts decorated Motzkin walks from~$0$ to $1$ that are positive except at the origin;
	\item $M$ counts decorated Motzkin walks from~$0$ to $1$ that are positive except at the origin and whose last $0$- or $+1$-step is a $+1$-step.\lv{ In other words, $M$ counts the trivial walk $(+1)$ and $T$-walks ending with a $+1$-step followed by a descending sequence.} 
\end{itemize}

\begin{pre}[Proof of Theorem~\ref{thmubt}]
In order to conclude, it suffices to show that $T=G$. We will first show that $\cT=\squ(\cM)$ and then that $M=zA$. This will be sufficient as we already noticed in Equation~\eqref{GzA} that $\cG=\squ(\cZ\cA)$.

\medskip
\noindent\textbf{First step:} We claim that $\cT=\squ(\cM)$. Indeed, a $T$-walk is either an $M$-walk or a $T$-walk whose last $0$- or $+1$-step is a $0$-step. In the latter case, let us denote by $k\ge 0$ the number of $-1$-steps after the last $0$-step (which is necessarily at height $k+1$), and by~$n$ the length of the walk. We do some kind of last passage decomposition at height~$k$: the walk is composed of three parts as follows. Between time~$0$ and the last time the walk hits height~$k$ before the last $0$-step, we have a decorated Motzkin walk from~$0$ to~$k$ that stays positive except at its origin; then until time $n-k-1$, we have a $T$-walk; and, finally, we have a decorated walk of the form $(0,-1,\ldots,-1)$ with $k+1$ steps. See Figure~\ref{TseqM}. We change the $0$-step of the last part into a $+1$-step and we concatenate the first part with it in order to obtain an $M$-walk. Note that this construction also works for $k=0$, in which case the first part (colored purple on Figure~\ref{TseqM}) is the empty walk and the outcome of the above operation is the pair consisting of the original walk with last step removed, together with the trivial $M$-walk~$(+1)$. As a result, we see that a $T$-walk whose last $0$- or $+1$-step is a $0$-step can be bijectively decomposed into a pair made of a $T$-walk and an $M$-walk. This yields $\cT=\cM+\cT\cM$, so that $\cT=\squ(\cM)$ as claimed.

\begin{figure}[ht]
	\centering\includegraphics[width=.95\linewidth]{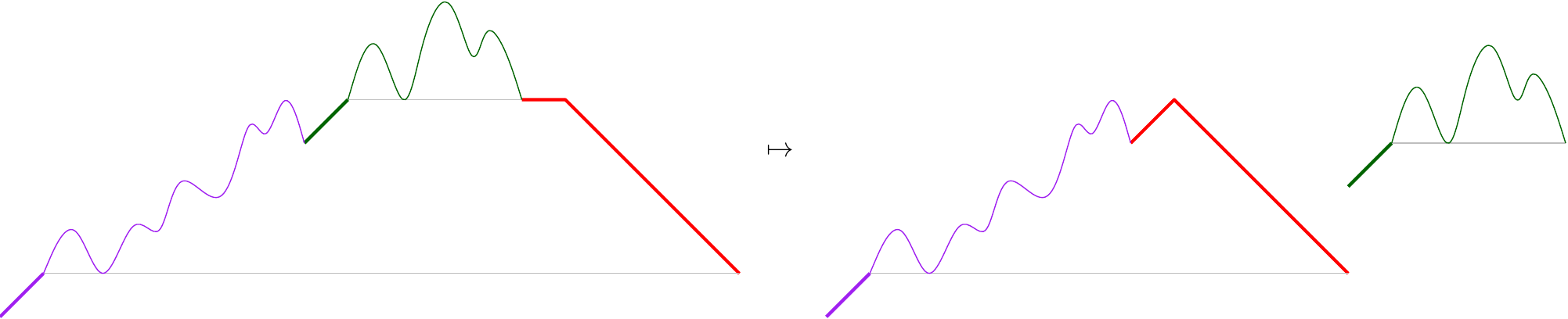}
	\caption{Proof that $T$-walks whose last $0$- or $+1$-step is a $0$-step are counted by $TM$. We extract a $T$-walk (in green) and what remains makes an $M$-walk by changing the last $0$-step into a $+1$-step.}
	\label{TseqM}
\end{figure}

\bigskip
\noindent\textbf{Second step:} Let us now show that $M=zA$. Recall from Section~\ref{secbij} the definitions of~$A$ and~$B$. \sv{We consider the first time an $A$-walk returns to~$0$. An $A$-walk is either empty, or hits~$1$ on its first excursion, or does not hit~$1$ on its first excursion. If it hits~$1$, it is of the form $\fb_1\fa_1\bar\fb_2\fa_2$ where the $\fa_i$'s are $A$-walks and the $\fb_i$'s are $B$-walks, if it does not, it is of the form $(+2)\fa_1(-2)\fa_2$ where the $\fa_i$'s are $A$-walks. This translates into the equation $A=1+BABA+zAzA$. Using~\eqref{GzA}, we can rewrite this equation as}\lv{Recall from the discussion before~\eqref{GzA} that $B=z/(1-zA)$ and from~\eqref{eq:idA} that $A=1+(zA)^2+(BA)^2$. As a result,}
$$zA=z\left(1+(zA)^2+\frac{(zA)^2}{(1-zA)^2}\right).$$

We now turn to~$M$. We consider an $M$-walk and, for the sake of convenience, we suppose for the time being that it is not the trivial walk $(+1)$. Let $k\ge 1$ denote the length of its last descending sequence and~$\si$ denote the permutation of size $k+1$ associated with it. We use the standard Catalan decomposition and split~$\si$ into two permutations avoiding~$213$, $\si'$ and~$\si''$, of respective sizes~$i$ and $k-i$. This means that the first right-to-left minimum of~$\si$ (which is necessarily equal to~$1$) has index $i+1$ and that~$\si'$ and~$\si''$ correspond to $(\si_1,\ldots, \si_{i})$ and $(\si_{i+2},\ldots, \si_{k+1})$, up to relabeling. Let us suppose furthermore that $1\le i \le k-1$ for the moment. We split the $M$-walk at the last time it hits~$i$ before the last $+1$-step and at time $n-k-1$ (that is, before the last $+1$-step); see Figure~\ref{decompM}. We concatenate the first part with the $i$-sequence $(0,-1,\ldots,-1)$ and we associate with the last descending sequence the permutation~$\si'$. Similarly, we add to the second part a $0$-step and a descending sequence of size $k-i-1$ with which we associate the permutation~$\si''$. We thus decomposed our original walk into a single step counted by~$z$ and a pair of $T$-walks whose last $0$- or $+1$-steps are both $0$-steps, which we saw in the first step above to be counted by~$TM$. Now, if $i=k$, we do the same construction for the first walk and obtain an empty second walk. If $i=0$, we obtain an empty first walk and a second walk counted by~$TM$. Finally, if the original walk is trivial, it is counted by~$z$. Putting all this together, we obtain $M=z\,(1+TM)^2$, which can be rewritten as
$$M=z\left(1+\frac{M^2}{1-M}\right)^2=z\left(1-M+\frac{M}{1-M}\right)^2=z\left(1+M^2+\frac{M^2}{(1-M)^2}\right).$$
Since~$zA$ and~$M$ satisfy the same Lagrangian equation of the form $X=z\varphi(X)$, they must be equal. This concludes the proof.
\end{pre}

\begin{figure}[ht]
	\centering\includegraphics[width=.95\linewidth]{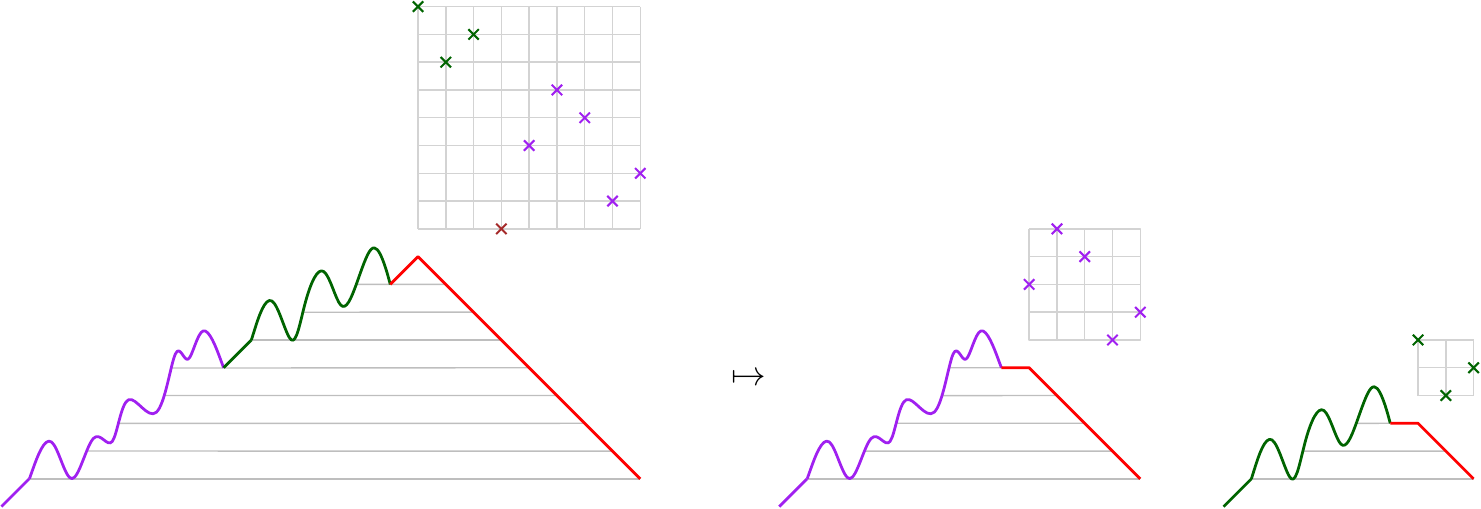}
	\caption{Recursive decomposition of an $M$-walk. We split the permutation associated with the last descending sequence into two permutations by the standard Catalan decomposition and we split the walk at the heights corresponding to the sizes of the sub-permutations.}
	\label{decompM}
\end{figure}

\lv{
\begin{note}
Using more thorough decompositions, we will show in Section~\ref{secstag} that the identity $M=zA$ holds in the more general setting of bivariate generating functions, with an extra parameter counting $0$-steps in~$M$ and staggered $\pm 2$-steps in $zA$. This will provide an alternate yet slightly longer proof of the second step above.
\end{note}

We end this section with the following observation. We just showed that $T=G$, so that~\eqref{eqBKKKKNW} also reads $1 + T + T^2 = \C$. Let us give an interpretation to this identity.

\begin{prop}
For any $n\ge 1$, the $(n-1)$-th Catalan number counts the number of $n$-vertex increasing unary-binary trees with associated permutation avoiding~$213$ that satisfy the following property: if we let 
$\ell_1$, \ldots, $\ell_k$ denote the leaves read after the last node in breadth-first search order, then the last leaf has the smallest label among the labels of $\ell_1$, \ldots, $\ell_k$.
\end{prop}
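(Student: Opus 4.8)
The plan is to interpret the identity $1+T+T^2=\C$ combinatorially, exactly in parallel with how the introduction reads Equation~\eqref{eqBKKKKNW} as $1+G+G^2=\C$. Since we have just established $T=G$, the $(n-1)$-th Catalan number counts $n$-step objects on the left-hand side, and we wish to see the three summands $1$, $T$, $T^2$ as a partition of the relevant family of increasing unary-binary trees into three subclasses. Concretely, I would first translate the stated property into a statement about the last descending sequence of the associated decorated Motzkin walk, using Lemma~\ref{lemMotz}: the leaves $\ell_1,\ldots,\ell_k$ read after the last node are precisely the leaves encoded by the final descending sequence (together with the initial $+1$-step, which encodes the very last leaf $\ell_k$), and the decoration on that descending sequence is a $213$-avoiding permutation recording their relative labels.

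The next step is to reread the first-step decomposition $\cT=\squ(\cM)$ from the proof of Theorem~\ref{thmubt} at the level of the final descending sequence. A $T$-walk's last $0$- or $+1$-step is either a $+1$-step (the walk is an $M$-walk) or a $0$-step. This dichotomy, iterated, writes any $T$-walk as a nonempty sequence of $M$-walks; equivalently $T=M/(1-M)$ and hence $1+T+T^2=1+M/(1-M)+M^2/(1-M)^2$, which I would verify collapses to $\C=1+G+G^2$ via $T=G$ and $M=zA$. The point, though, is to make the trichotomy geometric rather than algebraic: the summand $1$ should correspond to the single smallest object (the one-vertex-forcing base case, the walk $(+1)$), while $T$ and $T^2$ correspond to $C$-walks of the two shapes seen in the first-step argument. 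I would match ``the last leaf $\ell_k$ has the smallest label among $\ell_1,\ldots,\ell_k$'' to a constraint on the decoration of the final descending sequence: $\si_{k+1}$ being the minimum of $\si$ means the last index is a right-to-left minimum, which by the block structure recalled in the proof of Lemma~\ref{lemavoid} forces the final descending sequence's permutation to end at its own minimum, i.e.\ to be of the form where the last block is empty.

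The main obstacle I anticipate is pinning down exactly which Catalan object the stated property selects, so that its count is $\C$ rather than, say, $G$ or something off by a shift. The cleanest route is probably to bypass the walk encoding and argue directly that the stated leaf-condition is equivalent to requiring the decoration of the final descending sequence to avoid a forbidden configuration at its last position, and then to exhibit a bijection from such decorated trees to nontrivial binary trees (the class counted by $\C(z)-1$ shifted appropriately, per Equation~\eqref{eqC}). I would set up this bijection by combining the tree-to-Motzkin encoding of Lemma~\ref{lemMotz} with the observation that the Catalan structure $1+T+T^2$ splits a walk according to whether its last descending sequence's decoration is trivial, reduces on the left, or reduces on the right — exactly the standard Catalan decomposition used in the second step of the proof of Theorem~\ref{thmubt}. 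The delicate bookkeeping will be confirming the index shift: that $n$-vertex trees of this kind are counted by the $(n-1)$-th Catalan number and not the $n$-th, which I would check against the small cases $n=1,2$ directly before committing to the general bijection.
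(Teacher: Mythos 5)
Your setup coincides with the paper's: encode the tree as a decorated Motzkin walk (Lemma~\ref{lemMotz}), observe that $\ell_1,\ldots,\ell_k$ are exactly the vertices carried by the final descending sequence together with the initial $+1$-step, and translate the leaf-condition into a condition on the decoration~$\si$ of that final descending sequence, namely that $\si$ ends at its minimum. (Two slips here: ``the last index is a right-to-left minimum'' is vacuous, since the last index of \emph{any} permutation is one; and ``the last block is empty'' is not equivalent to ending at the minimum --- the correct reformulation is that $\si$ has exactly one right-to-left minimum, as the examples $(3,2,1)$ and $(1,2)$ show.) The genuine gap is in what follows: the combinatorial decomposition that actually proves the count is never constructed. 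The paper's proof is a trichotomy on the \emph{last step} of the walk. If it is a $+1$-step, the walk is $(+1)$: contribution~$z$. If it is a $0$-step, the condition is vacuous (then $k=1$) and deleting that step leaves an arbitrary $T$-walk: contribution~$zT$. If it is a $-1$-step, the condition $\si_k=1$ lets one cut the walk at the last hitting time of height~$1$ before the end, yielding two $T$-walks followed by one $-1$-step, the second walk's final descending sequence being decorated by $(\si_1-1,\ldots,\si_{k-1}-1)$: contribution~$zT^2$. Your proposed trichotomies do not do this job: the dichotomy $\cT=\cM+\cT\cM$ from the first step of the proof of Theorem~\ref{thmubt} classifies walks by their last $0$- or $+1$-step, which cuts across the relevant classes (walks ending with a descending sequence, on which the constraint bears, occur on both sides of that dichotomy), and ``trivial / reduces on the left / reduces on the right'' is never turned into a bijection. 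Moreover, the algebraic detour through $T=M/(1-M)$ and $M=zA$ is circular: the identity $1+T+T^2=\C$ is already immediate from $T=G$ and~\eqref{eqBKKKKNW}; what must be proved is that the \emph{constrained} trees are counted by $z$ times that left-hand side, and only the walk surgery above does that.

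The index shift that you explicitly leave unresolved is settled by exactly this trichotomy: each of the three cases carries one distinguished final step, so the generating function of the constrained trees is $z+zT+zT^2=z\,\C(z)$, whence the number of $n$-vertex such trees is $[z^{n-1}]\C(z)$, the $(n-1)$-th Catalan number. Without that factor of~$z$ --- that is, without a decomposition in which every case produces exactly one extra step --- your plan cannot distinguish the $(n-1)$-th Catalan number from the $n$-th, which is precisely the worry you flag at the end; checking $n=1,2$ would detect the discrepancy but not repair it.
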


\begin{pre}
Let us consider the decorated Motzkin walk of a tree satisfying the condition of the statement. We consider its last step.
\begin{itemize}
	\item If it is a $+1$-step, then the walk is the trivial walk $(+1)$ and the corresponding tree is the vertex-tree with a leaf labeled~$1$.
	\item If it is a $0$-step, then the decorated Motzkin walk does not satisfy any further constraints (it is just a decorated Motzkin walk to which a final $0$-step is added); the corresponding tree ends with a single leaf after a unary node.
	\item If it is a $-1$-step, we let $\si=(\si_1,\ldots,\si_k)$ be the permutation assigned to the last descending sequence. The condition of the statement merely translates into $\si_k=1$. By considering the last hitting time of~$1$ before the end, the Motzkin walk can be decomposed into two successive Motzkin walks and a final $-1$-step. These Motzkin walks naturally inherit the decorations from the original Motzkin walk, with the last descending sequence of the second Motzkin walk being assigned the permutation $(\si_1-1,\ldots,\si_{k-1}-1)$.
\end{itemize}
The previous classification shows that the generating function of the objects we consider is $z+z\,T+z\,T^2=z\C$, as desired.
\end{pre}

\subsection{Unary nodes and staggered steps}\label{secstag}
In this section, we focus on the number of $n$-vertex increasing unary-binary 
trees with associated permutation avoiding $213$ that have a given number of unary nodes.
We prove Proposition~\ref{unaire} and then identify the statistic on the basketball walks that
corresponds to the number of unary nodes 
in an increasing unary-binary tree. Throughout this section, we will consider bivariate generating function of walks, with a weight~$z$ per step as above and with an extra weight~$u$ for some steps that will be specified later on. For the sake of clarity, we still denote by~$A$, $B$, $G$, $M$ and~$T$ these bivariate functions. 

\begin{proof}[Proof of Proposition~\ref{unaire}]
Recall that unary nodes of a unary-binary tree correspond to $0$-steps of the encoding Motzkin walk. By this observation and Lemma~\ref{lemMotz}, the number of $n$-vertex increasing unary-binary trees with associated permutation avoiding~$213$ having exactly $n-1-2k$ unary nodes is equal to $[u^{n-1-2k}z^n] T(z, u)$, where the variable~$u$ marks the number of $0$-steps in the Motzkin walks. 

Redoing the proof of Theorem~\ref{thmubt} with bivariate functions counting decorated Motzkin walks with an extra weight~$u$ per $0$-step, we obtain $T=M+uTM$ and $M=z\,(1+TM)^2$, hence
 $T=\psi(M)$ and $M=z\,\phi(M)$, where
\begin{equation}\label{eq:M}
\psi(y)\de \frac{y}{1-uy}\qquad\text{ and }\qquad\phi(y)\de \left(1+\frac{y^2}{1-uy}\right)^{\!2} \,.
\end{equation}
Applying the Lagrange inversion formula, we obtain
\begin{align*}
[z^n] T(z, u) 
&= \frac1n\,[y^{n-1}]\psi'(y)\phi(y)^n= \frac1n\,[y^{n-1}]\frac{1}{(1-uy)^2}\bigg(1+\frac{y^2}{1-uy}\bigg)^{\!2n}\\
&= \frac1n\,\sum_{k=0}^{\lfloor(n-1)/2\rfloor} \binom{2n}{k} [y^{n-1-2k}] \frac{1}{(1-uy)^{k+2}}\\
&= \frac1n\,\sum_{k=0}^{\lfloor(n-1)/2\rfloor} \binom{2n}{k} \binom{n-k}{k+1} u^{n-1-2k}
\end{align*}
and the result follows.
\end{proof}

We now turn to the proof of Proposition~\ref{propstag}. Recall Definition~\ref{def:staggered} and that~$\cA$ is the class of nonnegative basketball walks from~$0$ to~$0$.

\begin{pre}[Proof of Proposition~\ref{propstag}]
We denote by $A(z,u)$ the bivariate generating function of $\cA$, where $z$ marks the number of steps and $u$ the number of staggered $\pm2$-steps.
A walk in~$\cA$ is
\begin{enumerate}[label=(\textit{\roman*})]
\item either empty;\label{stagi}
\item or is the concatenation of a $+2$-step, an $A$-walk, a $-2$-step and an $A$-walk;\label{stagii}
\item or visits~$1$ (at least once) before its first return to~$0$. In that latter case,
the walk is the concatenation of a $B$-walk, an $A$-walk, a reversed $B$-walk and an $A$-walk.
We distinguish three subcases:\label{stagiii}
\begin{enumerate}[label=(\textit{\alph*})]
\item the two $B$-walks are trivial, that is, equal to $(+1)$,\label{staga}
\item exactly one of the two $B$-walks is trivial,\label{stagb}
\item both $B$-walks have length~$2$ or more.\label{stagc}
\end{enumerate}
\end{enumerate}
Now observe that, in case~\ref{stagi}, \ref{stagii}, \ref{stagiii}\ref{staga} and~\ref{stagiii}\ref{stagb}, each staggered $\pm2$-step of the original walk remains a staggered $\pm2$-step in one of its subwalks. In case~\ref{stagiii}\ref{stagc}, both $B$-walks are composed of a first $+2$-step that is not staggered in the original walk, then an $A$-walk and a reversed $B$-walk. The previous decomposition thus gives the following identity in terms of bivariate generating functions:
$$A = \underbrace{\vphantom{(z A)^2} 1}_{\ref{stagi}} + \underbrace{(z A)^2}_{\ref{stagii}} + \underbrace{(zA)^2}_{\ref{stagiii}\ref{staga}} + \underbrace{2 z A^2 (B-z)}_{\ref{stagiii}\ref{stagb}} + \underbrace{(zAB)^2 A^2}_{\ref{stagiii}\ref{stagc}}.$$
Let us express~$B$ in terms of~$A$. 
Following the discussion that led to~\eqref{GzA}, a $B$-walk is either the trivial walk $(+1)$, or a staggered $+2$-step followed by an $A$-walk then a reversed $B$-walk. As a result, $B=z+uzAB$, so that $B = z/(1-uzA)$. We thus obtain
\begin{align*}
zA &= z\left(1+ 2(zA)^2 + 2u \frac{(zA)^3}{1-uzA} + \frac{(zA)^4}{(1-uzA)^2}\right)\\
&= z\left(1+\frac{2(zA)^2}{1-uzA} + \frac{(zA)^4}{(1-uzA)^2}\right)
= z\left(1+\frac{(zA)^2}{1-uzA}\right)^{\!2},
\end{align*}
which is also the Lagrangian equation~\eqref{eq:M} verified by~$M$. In order to conclude the proof, it is enough to note that $G = AB=zA/(1-uzA)$, and that $T = M/(1-uM)$. 
\end{pre}

\begin{pre}[Proof of Proposition~\ref{propstagA}]
We obtained during the proof of Proposition~\ref{propstag} that $zA=z\phi(zA)$ where~$\phi$ is defined by~\eqref{eq:M}. By the Lagrange inversion formula, we have, for $n\geq 2$,
\begin{align*}
[z^{n-1}] A(z, u) 
&= [z^{n}]\, zA(z, u) =\frac1{n}\,[y^{n-1}]\phi(y)^{n}= \frac1{n}\,[y^{n-1}]\bigg(1+\frac{y^2}{1-uy}\bigg)^{\!2n}\\
&= \frac1{n}\,\sum_{k\geq 0} \binom{2n}{k} [y^{n-1-2k}] \frac{1}{(1-uy)^{k}}\\
&= \frac1{n}\,\sum_{k\geq 0} \binom{2n}{k} \binom{n-k-2}{k-1} u^{n-1-2k}
\end{align*}
and the result follows. Note that the formula $[y^a](1-y)^{-b}=\dbinom{a+b-1}{b-1}$ is not valid for $a=0$ and $b=0$; this is why we assumed $n\geq 2$.
\end{pre}

\subsection{Turning Theorem~\ref{thmubt} into an explicit bijection}

Recall that we obtained in Section~\ref{secgen} a first formal proof (via a decomposition grammar on which a few manipulations were performed) that the class~$\cC$ of basketball walks is a Catalan class. Then, in Section~\ref{secbij}, we turned this into an explicit recursive bijection with binary trees, by realizing the formal grammar manipulations as explicit walk manipulations. 

We can do the same thing in order to obtain an explicit recursive bijection between $G$-walks and increasing unary-binary trees with associated permutation avoiding $213$. In fact, we have already done in the previous sections all the necessary operations. Moreover, the study we did with bivariate generating functions in Section~\ref{secstag} will yield that the statistics counted by~$u$ in~$\cG$ will correspond to the statistics counted by~$u$ in~$\cT$.

Recall Equation~\eqref{GzA} and the proof of Theorem~\ref{thmubt}. We showed that $\cG=\cZ\cA+\cZ\cA\cG$ and that $\cT=\cM+\cM\cT$ in a combinatorially tractable way. We mean by this that we can explicitly write an object of $\cG$ as either an object of $\cZ\cA$ or a pair of an object of~$\cZ\cA$ and one of~$\cG$ and the same holds with~$\cM$ instead of~$\cZ\cA$ and~$\cT$ instead of~$\cG$. Moreover, these equations hold indifferently for univariate or bivariate generating functions. As a result, it suffices to explain how~$\cZ\cA$ and~$\cM$ can be decomposed in an isomorphic way. But this was actually done in Section~\ref{secstag}. Indeed, we combinatorially obtained
$$\cZ\cA = \cZ\left(\un+ 2(\cZ\cA)^2 + 2u (\cZ\cA)^3\sq(u\cZ\cA) + (\cZ\cA)^4\big(\sq(u\cZ\cA)\big)^2\right)\,,$$
as well as 
$$\cM = \cZ\Big(\un+\cM^2\sq(u\cM)\Big)^2\,.$$
The latter equation can be rewritten as the first one by expanding the square, which, combinatorially, means distinguishing whether each element of the pair is empty or not and by using the identity $\sq(u\cM)=\un+u\cM\sq(u\cM)$, which translates into saying that a sequence is empty or is composed of a first element and another sequence. 
}


\bibliographystyle{alpha}
\bibliography{main}

\end{document}